\renewcommand{\thefootnote}{} 
\theoremstyle{plain} 
\newtheorem{theorem}{\indent\sc Theorem}[section]
\newtheorem{lemma}[theorem]{\indent\sc Lemma}
\newtheorem{proposition}[theorem]{\indent\sc Proposition}
\theoremstyle{definition} 
\newtheorem{definition}[theorem]{\indent\sc Definition}
\newtheorem{remark}[theorem]{\indent\sc Remark}
\newtheorem{notation}[theorem]{\indent\sc Notation}
\newcommand{\R}{\mathbb{R}}
\newcommand{\N}{\mathbb{N}}
\newcommand{\Pen}{\mathrm{Pen}}
\newcommand{\supp}{\mathop{\mathrm{supp}}\nolimits}
\newcommand{\abs}[1]{\lvert#1\rvert}
\newcommand{\norm}[1]{\lVert#1\rVert}
\newcommand{\limone}{\varprojlim{\!}^1}
\newcommand{\U}{\mathcal{U}}
\newcommand{\Horo}{\mathcal{H}}
\newcommand{\EG}{\underline{E}G}
\newcommand{\EP}{\underline{E}P}
\newcommand{\Xaug}{X(G,\mathbb{P},\mathcal{S})}
\newcommand{\EX}{EX(G,\mathbb{P})}
\newcommand{\famP}{\mathbb{P}}
\newcommand{\vect}[1]{\mathbf{#1}}
\def\a{\mathfrak{a}}
\def\address#1#2{\begingroup
\noindent\parbox[t]{7.8cm}{%
\small{\scshape\ignorespaces#1}\par\vskip1ex
\noindent\small{\itshape E-mail address}%
\/: #2\par\vskip4ex}\hfill%
\endgroup}%
\title{\uppercase{The coarse Baum-Connes conjecture for relatively
hyperbolic groups}}
\author{
%
%
\textsc{Tomohiro Fukaya, Shin-ichi Oguni} 
}
\date{} 
\begin{document}

\maketitle

\footnote{ 
2010 \textit{Mathematics Subject Classification}.
Primary 58J22; Secondary 20F67, 20F65.
}
\footnote{ 
\textit{Key words and phrases}. 
coarse Baum-Connes conjecture, relatively hyperbolic group,
Mayer-Vietoris sequence
}
\footnote{ 
T.Fukaya was supported by Grant-in-Aid for Young Scientists (B)
(23740049) from The Ministry of Education, Culture, Sports, Science and
Technology
}
\renewcommand{\thefootnote}{\fnsymbol{footnote}} 


\begin{abstract}
We study a group which is hyperbolic relative to a finite family of
 infinite subgroups. We show that the group satisfies the coarse
 Baum-Connes conjecture if each subgroup belonging to the family
 satisfies the coarse Baum-Connes conjecture and admits a
 finite universal space for proper actions.
 Especially, the group satisfies the analytic Novikov conjecture.
\end{abstract}

\section{Introduction} 
\label{conj}
 Let $X$ be a proper metric space.
 We say that $X$ satisfies the coarse Baum-Connes conjecture if the
 following coarse
 assembly map $\mu_X$ of $X$ is an isomorphism:
\[
 \mu_X \colon KX_*(X) \rightarrow K_*(C^*(X)).
\]
  If a countable group $G$ equipped with a proper invariant metric
 satisfies the coarse Baum-Connes conjecture, and if $G$ admits a finite
 $G$-simplicial complex which is a universal space for proper actions,
 then, by a descent principle, $G$ satisfies the analytic Novikov
 conjecture. For details, see~\cite[Theorem 8.4]{MR1399087} and also
 \cite[Theorem 12.6.3]{MR1817560}.

There are several studies on the coarse
Baum-Connes conjecture for relatively hyperbolic groups. Let $G$ be a
group which is hyperbolic relative to a finite family of infinite
subgroups $\famP =\{P_1,\dots, P_k\}$ . Osin~\cite{MR2181790}
showed that $G$ has finite asymptotic dimension if each subgroup $P_i$
has finite asymptotic dimension. Ozawa~\cite{MR2243738} showed that $G$
is exact if each subgroup $P_i$ is exact. Dadarlat and
Guentner~\cite{MR2364071} showed that $G$ is uniformly embeddable in a
Hilbert space if each subgroup $P_i$ is uniformly embeddable in a
Hilbert space. Due to Yu's works~\cite{MR1626745}\cite{MR1728880},
those results imply the coarse Baum-Connes conjecture for such groups.

In the present paper, we show the following:
\begin{theorem}
\label{main_theorem} Let $G$ be a finitely generated group and
 $\famP=\{P_1,\dots,P_k\}$ be a finite family of infinite subgroups.
 Suppose that $(G,\famP)$ is a relatively hyperbolic group. 
 If each subgroup $P_i$ satisfies
 the coarse Baum-Connes conjecture, and  admits a finite $P_i$-simplicial
 complex which is a universal space for proper actions, then $G$ 
 satisfies the coarse Baum-Connes conjecture.
\end{theorem}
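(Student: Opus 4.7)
The plan is to combine two ingredients that the notation in the paper foreshadows: an auxiliary metric space $\Xaug$ obtained from the Cayley graph of $G$ by attaching a Groves--Manning style combinatorial horoball to each peripheral coset, and a coarse Mayer--Vietoris decomposition of $\Xaug$.

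First I would build $\Xaug$ and verify that it is Gromov hyperbolic, that the attached horoballs $\U_{gP_i}$ are coarsely trivial in the sense that their coarse $K$-homology vanishes (they are quasi-isometric to open cones with one-point Gromov boundary), and that $G$ is related to $\Xaug$ in a way that lets us extract coarse data about $G$ from coarse data about $\Xaug$ together with the parabolic subgroups. This last step should use a cocompact $G$-model for $\EG$ assembled from the finite models of $\EP_i$, in the spirit of Dahmani's construction.

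Next I would decompose $\Xaug = \V \cup \U$, with $\V$ a uniform neighborhood of the image of the Cayley graph of $G$ in $\Xaug$ and $\U = \bigsqcup_{j} \U_{j}$ the disjoint union of horoballs over all peripheral cosets. The pair is coarsely excisive in the sense of Higson--Roe--Yu, so there is a ladder of long exact Mayer--Vietoris sequences relating $KX_*$ and $K_*(C^*(-))$ along the coarse assembly map. Piece-by-piece CBC then goes as follows: $\V$ is coarsely Gromov hyperbolic, so CBC holds by Higson's theorem; $\V \cap \U$ is coarsely a disjoint union of horospheres, each coarsely equivalent to some $P_i$, so CBC holds by hypothesis plus a permanence argument for coarse disjoint unions of bounded-geometry spaces; $\U$ is coarsely a disjoint union of horoballs, which are coarsely cones, and CBC here can be deduced from CBC for $P_i$ together with finiteness of $\EP_i$. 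The five lemma then gives CBC for $\Xaug$.

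The main obstacle I expect is the descent from CBC of $\Xaug$ back to CBC of $G$. The large-scale geometry of $\Xaug$ differs drastically from that of $G$, because the horoballs collapse the $P_i$-cosets, so coarse equivalence of $G$ with $\Xaug$ is not available. Instead one must perform a genuine Mayer--Vietoris-type argument directly on the level of $G$, using a cover of $G$ (or of $\EG$) by $G$-translates of thickened $P_i$-cosets together with a hyperbolic complement, and compare the resulting assembly-map ladder with the one on $\Xaug$. Making this comparison precise, and correctly formulating CBC for the ``cone-like'' horoball pieces so that the five lemma applies at every stage, is the technical heart of the argument.
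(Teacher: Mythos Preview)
Your proposal has a structural confusion and a genuine gap. The confusion: you decompose $\Xaug=\V\cup\U$ with $\V$ a uniform neighborhood of the Cayley graph, and then assert that $\V$ is coarsely Gromov hyperbolic. It is not; $\V$ is coarsely equivalent to $G$ itself, which is precisely the space whose CBC you are trying to establish. As written, your five-lemma step is therefore circular, and in any case it concludes CBC for $\Xaug$, which is already known from hyperbolicity. The correct direction---deducing CBC for $\V\simeq G$ from CBC for $\Xaug$, $\U$, and $\V\cap\U$---is what your final paragraph is reaching for, but even reversed the argument does not close. Both $\U$ and $\V\cap\U$ are \emph{infinite} coarse disjoint unions (of horoballs and of $P_{(i)}$-cosets), and the ``permanence argument for coarse disjoint unions'' you invoke is not available: CBC is not known to pass to infinite coarse disjoint unions, and commuting the direct limit defining $KX_*$ with an infinite product is exactly the technical crux.

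The paper circumvents this by a different architecture. Rather than removing all horoballs at once, it orders them and removes one at a time: with $X_n$ equal to $\Xaug$ minus the first $n-1$ horoballs, each decomposition $X_n=X_{n+1}\cup\Horo(g_nP_{(n)})$ is $\omega$-excisive with intersection coarsely a \emph{single} copy of $P_{(n)}$, so an honest five-lemma step propagates CBC from $X_1=\Xaug$ to every $X_n$. The passage to $X_\infty=\Gamma$ is then a projective-limit argument, matching Phillips' $\limone$ sequence for $K_*(C^*(X_n))$ against a Milnor $\limone$ sequence on the homological side. The paper stresses, with an explicit counterexample, that no such Milnor sequence holds for coarse $K$-homology $KX_*$; the repair is to build locally compact contractible models $EX_n=\EG\cup\bigcup_{i\ge n} g_i\underline{E}P_{(i)}\times[0,\infty)$ from the hypothesized finite $\underline{E}P_i$, prove $K_*(EX_n)\cong KX_*(X_n)$ (this is Proposition~\ref{prop:K-homology-of-X}, and the finiteness of $\underline{E}P_i$ is used here to interchange a product with a direct limit), and run the Milnor sequence in ordinary $K$-homology of one-point compactifications. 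This contractible-model step is the missing idea your last paragraph needs.
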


We note that $G$ admits a finite $G$-simplicial complex which is a
universal space for proper actions (see
Appendix~\ref{appendix:coarse-baum-connes-for-hyp}).

Here we summarize the proof of Theorem~\ref{main_theorem}. Let $\Xaug$
be the augmented space obtained by attaching horoballs to the Cayley
graph $\Gamma(G,\mathcal{S})$ along the left cosets of subgroups $P\in
\famP$ where $\mathcal{S}$ is a finite generating set
(Definition~\ref{def:Horoball} and Definition~\ref{notation:1}). Since
$\Xaug$ is $\delta$-hyperbolic, $\Xaug$ satisfies the coarse Baum-Connes
conjecture. We fix an order on horoballs. Let $X_n$ be a subspace
obtained by removing the first $n-1$ horoballs from $\Xaug$
(Notation~\ref{notation:X_n}). By Mayer-Vietoris arguments, we show
inductively that $X_n$ satisfies the coarse Baum-Connes conjecture
(Section~\ref{sec:induction-part}). To study the coarse assembly map for
$X_\infty= \bigcap X_n$, which is coarsely equivalent to $G$, we need to
analyze the coarse $K$-homology of the projective limit. We might expect
a so-called Milnor exact sequence
\begin{align}
\label{eq:wrong}
 0\rightarrow \limone KX_{p+1}(X_n) \rightarrow KX_p(X_\infty) 
 \rightarrow \varprojlim KX_p(X_n) \rightarrow 0.
\end{align}
Unfortunately, ~(\ref{eq:wrong}) is not necessarily exact, in general. A
simple counterexample is given by $Y_n = \R \setminus [-n,n]$. Thus we
introduce a contractible space $\EX$. The
following isomorphism (Proposition~\ref{prop:K-homology-of-X}) is
crucial to the proof of Theorem \ref{main_theorem}:
\[
 KX_*(\Xaug) \cong K_*(\EX).
\]
Sections~\ref{sec:coarse-k-homology}
and~\ref{sec:contractible-model} are
devoted to a proof of 
this isomorphism.  For the projective limit of locally
compact Hausdorff spaces, there is a Milnor exact sequence in
$K$-homology
(Section~\ref{sec:projective-limit-part}).
Combining this with an exact sequence in $K$-theory of $C^*$-algebras
(Proposition~\ref{prop:Phillips}), we complete the proof.

\section{Coarse K-homology of the augmented space}
\label{sec:coarse-k-homology}
Let $G$ be a finitely generated group with a finite family of
infinite subgroups 
$\famP = \{P_1,\dots P_k\}$. 
Groves and Manning \cite{MR2448064} introduced a space obtained by
attaching ``combinatorial horoballs'' to $G$ along the left
cosets of subgroups $P\in \famP$. Their construction is suitable for
Mayer-Vietoris arguments to compute the coarse K-homology of $G$ in
terms of that of $P\in \famP$.
We review the construction and study the coarse K-homology of the
resulting space.
\subsection{The augmented space}
\label{sec:augmented-space}
\begin{definition}
\label{def:Horoball}
Let $(P,d)$ be a proper metric space.
{\itshape The combinatorial horoball} based on $P$, denoted by
 $\Horo(P)$, is the graph defined as follows:
\begin{enumerate}
 \item $\Horo(P)^{(0)} = P \times (\N\cup \{0\})$. 
 \item $\Horo(P)^{(1)}$ contains the following two type of edges:
       \begin{enumerate}
	\item For each $l\in \N\cup\{0\}$ and $p,q\in P$, 
	      if $0< d(p,q)\leq 2^{l}$
	      then there is a 
	      {\itshape horizontal edge} connecting $(p,l)$ and $(q,l)$.
	\item For each $l\in \N\cup\{0\}$ and $p\in P$, there is a 
	      {\itshape vertical edge} connecting $(p,l)$ and $(p,l+1)$.
       \end{enumerate}
\end{enumerate}
Here $\N$ denotes the set of positive integers. We endow $\Horo(P)$ with
 the graph metric. For a closed subset $I\subset \R$, let $\Horo(P;I)$ denote the
full subgraph of $\Horo(P)$ spanned by
 $P\times (I \cap(\N\cup\{0\}))$.
\end{definition}


Let $G$ be a finitely generated group with a finite family of
infinite subgroups 
$\famP = \{P_1,\dots P_k\}$. 
We take a finite generating set $\mathcal{S}$ for $G$. 
We assume that
$\mathcal{S}$ is symmetrized, so that $\mathcal{S} = \mathcal{S}^{-1}$.
We endow $G$ with the left-invariant word metric
$d_{\mathcal{S}}$ with respect to $\mathcal{S}$.
We choose a sequence
$g_{1},g_{2},\dots$ in $G$ such that for each $r \in
\{1,\dots,k\}$, the map 
$\N\rightarrow G/P_r: a \mapsto g_{ak+r}P_r$ is bijective.
For $i=ak+r \in \N$, let $P_{(i)}$ denote a subgroup $P_r$.
Thus the set of all cosets $\bigsqcup_{r=1}^k G/P_r$ is indexed by the map
$\N\ni i \mapsto g_iP_{(i)}$.
Each coset $g_iP_{(i)}$ has a proper metric $d_i$ which is the
restriction of $d_{\mathcal{S}}$. 
Let $\Gamma$ be the Cayley graph of $(G,\mathcal{S})$.
There exists a natural embedding 
$\psi_i\colon \Horo(g_iP_{(i)};\{0\}) \hookrightarrow \Gamma$ such that
$\psi_i(x,0) = x$ for all $x\in g_iP_{(i)}$. 
\begin{definition}
\label{notation:1}
{\itshape The augmented space } $\Xaug$ is obtained by pasting
 $\Horo(g_iP_{(i)})$
 to $\Gamma$ by $\psi_i$ for all $i\in \N$. 
Thus we can write it as follows:
\begin{align*}
 \Xaug &= \Gamma\cup \bigcup_{i\in \N} \Horo(g_iP_{(i)}).
\end{align*}
We endow $\Xaug$ with the graph metric.  For
positive integer $N$, set
 \begin{align*} 
 X(N) &= \Gamma\cup \bigcup_{i\in \N} \Horo(g_iP_{(i)};[0,N]);\\
 Y(N) &= \bigsqcup_{i\in \N} \Horo(g_iP_{(i)};[N,\infty));\\ 
 Z(N) &= \bigsqcup_{i\in \N} \Horo(g_iP_{(i)};\{N\}).
\end{align*}
\end{definition}
\begin{remark}
 The vertex set of $\Xaug$, denoted by $\Xaug^{(0)}$, can naturally be
 identified with the set of 2-tuple $(x,t)$, where $x\in
 \bigsqcup_{i} g_iP_{(i)}$ and $t\in \N$, or $x\in G$ and $t=0$.
 We endow $\Xaug^{(0)}$ with the metric from the graph structure.
\end{remark}
%
%
%
\begin{definition}
 The pair $(G,\famP)$ is a relatively hyperbolic group
 if the augmented space $\Xaug$ is $\delta$-hyperbolic for some 
 $\delta \geq 0$.
\end{definition}
\begin{remark}
 Groves and Manning~\cite[Theorem 3.25]{MR2448064} show that the above
 definition is equivalent to other various definitions.  See
 also~\cite{MR2684983}.
\end{remark}

\subsection{An anti-\v{C}ech system}
We form an anti-\v{C}ech system $\{\U(j)\}_j$ of $\Xaug^{(0)}$ 
as follows: For $i\geq 1, (x,t)\in g_iP_{(i)}\times \N$ and $j\geq 1$, 
a column centered at $(x,t)$ with the size $j$ is
\[
 B((x,t),j) = 
 \{(y,l) \in g_iP_{(i)}\times \N: d_{\mathcal{S}}(x,y) \leq 2^{t+j}, t\leq
 l\leq t+j\}.
\]
For $x\in G$ and $j\geq 1$, a column centered at $(x,0)$
with the size $j$ is 
\[
 B((x,0),j) = 
 \{(y,l) \in \Xaug^{(0)}: d_{\mathcal{S}}(x,y) \leq 2^{j}, 0\leq l\leq j\}.
\]
The locally finite cover $\U(j)$ is made up of all those columns with
size $j$, that is, 
\[
 \U(j) = \{B((x,t),j): (x,t) \in \Xaug^{(0)}\}.
\]
When $j\leq j'$, the map $\U(j)\rightarrow \U(j')$ is defined by sending
$B((x,t),j)$ to $B((x,t),j')$.

\subsection{Mayer-Vietoris sequences}
Set $j_n = 3^n, N_n = 3^n+1$ for $n\geq0 $.
We introduce a decomposition of $\U(j_n)$ as follows:
\begin{align*}
 \U_n &= \U(j_n);\\
 \mathcal{X}_n &= \{B\in \U(j_n) : B \cap X(N_n) \neq \emptyset\}; \\
 \mathcal{Y}_n &= \{B\in \U(j_n) : B \cap Y(N_n) \neq \emptyset\}; \\
 \mathcal{Z}_n &= \{B\in \U(j_n) : B \cap Z(N_n) \neq \emptyset\}; \\
 \mathcal{Z}_n^i &= \{B\in \mathcal{Z}_n : B\cap \Horo(g_iP_{(i)})
  \neq \emptyset \}. 
\end{align*}
We remark that $\U_n = \mathcal{X}_n \cup \mathcal{Y}_n,
\mathcal{X}_n\cap \mathcal{Y}_n = \mathcal{Z}_n$ and $\mathcal{Z}_n =
\bigsqcup_i \mathcal{Z}_n^i$.  Then the pair $(\mathcal{X}_n,
\mathcal{Y}_n)$ forms an excision pair of
$\U_n$ and the map $\U_n \rightarrow \U_{n+1}$ preserves the pairs. Thus
we have the following exact sequence:
\begin{align}
\label{eq:varinjelimMV}
\cdots \rightarrow \varinjlim K_p(\abs{\mathcal{Z}_n}) 
 \rightarrow \varinjlim K_p(\abs{\mathcal{X}_n}) 
 \oplus \varinjlim K_p(\abs{\mathcal{Y}_n}) 
 \rightarrow \varinjlim K_p(\abs{\U_n}) \rightarrow 
 \varinjlim K_{p-1}(\abs{\mathcal{Z}_n}) \rightarrow \cdots. 
\end{align}
Since $\{\U_n\}_n$ forms an anti-\v{C}ech system of $\Xaug^{(0)}$, 
we have $\varinjlim K_*(\abs{\U_n}) = KX_*(\Xaug)$. 
In this section, we compute $\varinjlim
K_*(\abs{\mathcal{X}_n})$ and $\varinjlim K_*(\abs{\mathcal{Y}_n})$. 

\begin{lemma}
\label{lem:X1}
 The inductive limit of $K_*(\abs{\mathcal{X}_n})$ is
 isomorphic to $KX_*(X(1))$.
\end{lemma}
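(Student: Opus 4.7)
The plan is to reduce the direct system $\{\mathcal{X}_n\}$ to the ``level-zero'' subsystem $\mathcal{X}_n^{(0)}:=\{B((x,0),j_n):x\in G\}\subset \mathcal{X}_n$ and then to identify the nerves of that subsystem with those of a natural anti-\v{C}ech system on $X(1)$. The reduction rests on the containment
\[
 B((x,t),j_n)\subset B((x,0),j_{n+1})\qquad (0\le t\le N_n),
\]
which follows from $t+j_n\le N_n+j_n=2\cdot 3^n+1\le 3^{n+1}=j_{n+1}$ together with $2^{t+j_n}\le 2^{j_{n+1}}$. This lets me choose the refinement of the anti-\v{C}ech system $\mathcal{X}_n\to\mathcal{X}_{n+1}$ to factor as $\mathcal{X}_n\xrightarrow{r_n}\mathcal{X}_{n+1}^{(0)}\hookrightarrow \mathcal{X}_{n+1}$ with $r_n(B((x,t),j_n))=B((x,0),j_{n+1})$. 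A standard contiguity check---both $B((x,t),j_{n+2})$ and $B((x,0),j_{n+2})$ contain $(x,t)$, so the two candidate refinements are simplicially homotopic after one further step---shows that the inclusion $\abs{\mathcal{X}_n^{(0)}}\hookrightarrow \abs{\mathcal{X}_n}$ induces an isomorphism on $\varinjlim K_*$.

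Next I would introduce the cover $\V_n:=\{B((x,0),j_n)\cap X(1)^{(0)}:x\in G\}$ of $X(1)^{(0)}$ and establish a simplicial isomorphism $\abs{\mathcal{X}_n^{(0)}}\cong \abs{\V_n}$ via the natural bijection on vertex sets. Indeed, any common vertex $(y,l)$ of columns $B((x_i,0),j_n)$ in $\Xaug^{(0)}$ has $y\in G$ and $d_{\mathcal{S}}(x_i,y)\le 2^{j_n}$ for every $i$, so the level-zero vertex $(y,0)\in X(1)^{(0)}$ is also a common vertex; the simplicial structures therefore coincide. Finally I would verify that $\{\V_n\}$ forms an anti-\v{C}ech system of $X(1)$ in its intrinsic graph metric $d_{X(1)}$: each $\V_n$-set has $d_{X(1)}$-diameter at most $2^{j_n+1}+2$ (using $d_{X(1)}\le d_{\mathcal{S}}+2$ on $X(1)^{(0)}$), and the Lebesgue number is at least $2^{j_n-1}$, since every edge of $X(1)$ alters the first coordinate by at most two units in $d_{\mathcal{S}}$ and hence $d_{\mathcal{S}}(y,z)\le 2\,d_{X(1)}((y,l),(z,l'))$. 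As $j_n\to\infty$ this yields $\varinjlim K_*(\abs{\V_n})=KX_*(X(1))$, and combining the three steps gives the desired isomorphism.

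The principal obstacle is the contiguity argument used to reduce to the level-zero subsystem: one must verify carefully that $r_n$ and the canonical refinement of $\mathcal{X}_n$ induce the same map on $K$-homology in the direct limit, using that both land inside a common column $B((x,0),j_{n+2})$ of $\mathcal{X}_{n+2}$. The remaining steps are essentially combinatorial bookkeeping, once the anti-\v{C}ech system $\{\V_n\}$ on $X(1)$ is singled out via the quasi-isometric comparison between $d_{\mathcal{S}}$ and $d_{X(1)}$.
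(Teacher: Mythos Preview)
Your proposal is correct and follows essentially the same strategy as the paper: compress the columns in $\mathcal{X}_n$ down to low level via a contiguity argument, then identify the resulting nerves with an anti-\v{C}ech system of $X(1)$. The only cosmetic differences are that the paper compresses to the subsystem $\U(1,j_n)$ of columns meeting $X(1)$ (so $t\le 1$) via maps $\alpha_n,\beta_n,\gamma_n$, whereas you compress all the way to the level-zero subsystem $\mathcal{X}_n^{(0)}$; and the paper asserts $\abs{\U(1,j_n)\cap X(1)}=\abs{\U(1,j_n)}$ and the anti-\v{C}ech property without comment, while you spell out the nerve isomorphism and the Lebesgue/diameter estimates explicitly. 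Incidentally, your contiguity step already works at level $n+1$ (the common witness $(y,l)$ of a simplex in $\mathcal{X}_n$ lies in every $B((x_i,0),j_{n+1})$ and every $B((x_i,t_i),j_{n+1})$), so the extra step to $n+2$ is unnecessary.
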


\begin{proof}
For $N\geq j+1\geq 0$, we define that the subset $\U(N,j)$ of $\U(j)$ is
made up of all columns $B((x,t),j) \in \U(j)$ which intersect with $X(N)$.
We remark that $\mathcal{X}_n = \U(N_n,j_n)$.
We define simplicial maps $\alpha_n,\, \beta_n,\, \gamma_n$ by
\begin{align*}
 \alpha_n &\colon \U(1,j_n) \rightarrow \U(N_{n},j_{n}) &\colon& B((x,t),j_n)
 \mapsto B((x,t),j_{n}),\\
 \beta_n &\colon \U(N_n,j_n) \rightarrow \U(1,j_{n+1}) &\colon& B((x,t),j_n)
 \mapsto \begin{cases}
	  B((x,1),j_{n+1}) &(t\geq 1)\\
	  B((x,0),j_{n+1}) &(t=0),
	 \end{cases}\\
 \gamma_n &\colon \U(N_n,j_n)  \rightarrow \U(N_{n+1},j_{n+1}) 
 &\colon& B((x,t),j_n) \mapsto B((x,t),j_{n+1}).
\end{align*}
Clearly $\alpha_{n+1} \circ \beta_n$ and $\gamma_n$ belong to the same
contiguity class.
Since two simplicial maps belonging to the same contiguity class define
continuous maps which are homotopic \cite[Lemma 5.5.2.]{MR666554}, we
have the following commutative diagram:
\begin{align*}
 \xymatrix{
 K_*(\abs{\U(1,j_n)}) \ar[r]^{{\alpha_n}_*} \ar[d] & 
   K_*(\abs{\U(N_n,j_n)}) \ar[dl]_{{\beta_n}_*} \ar[d]_{{\gamma_n}_*}\\
 K_*(\abs{\U(1,j_{n+1})}) \ar[r]^{{\alpha_{n+1}}_*} & 
   K_*(\abs{\U(N_{n+1},j_{n+1})}) .
}
\end{align*}
It follows that 
$\varinjlim K_*(\abs{\U(1,j_n)}) \cong \varinjlim K_*(\abs{\U(N_n,j_n)})$.

Let $\U(1,j_n)\cap X(1)$ denote the cover of $X(1)$ which consists of all
$B\cap X(1)$ for $B\in \U(1,j_n)$. Then $\{\U(1,j_n)\cap X(1)\}_n$ forms
an anti-\v{C}ech system of $X(1)$. 
Since $\abs{\U(1,j_n)\cap X(1)}$ = $\abs{\U(1,j_n)}$, we have 
$KX_*(X(1)) = \varinjlim K_*(\abs{\mathcal{X}_n})$.
\end{proof}

\begin{lemma}
 The inductive limit of $K_*(\abs{\mathcal{Y}_n})$ is trivial.
\end{lemma}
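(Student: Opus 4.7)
The geometric content of the lemma is that $Y(N_n) = \bigsqcup_{i} \Horo(g_iP_{(i)}; [N_n,\infty))$ is a disjoint union of combinatorial horoball tails, each of which is $\delta$-hyperbolic with a single Gromov boundary point (the apex at vertical infinity) and therefore coarsely trivial in $K$-homology. The plan is to realize this triviality at the level of nerves by constructing a simplicial ``vertical shift'' that is contiguous to the anti-\v{C}ech transition map and whose iterates factor through contractible subcomplexes.

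First, I would introduce the shift
\[
\tau_n \colon \mathcal{Y}_n \to \mathcal{Y}_{n+1}, \qquad B((x,t), j_n) \mapsto B((x, t+j_n), j_{n+1}),
\]
which raises the basepoint of each column by $j_n$ levels and enlarges the size to $j_{n+1} = 3 j_n$. Simpliciality is a direct check: given a simplex $\{B((x_s, t_s), j_n)\}_s$ with common vertex $(y,l)$, the point $(y, \max_s t_s + j_n)$ lies in every image column $B((x_s, t_s + j_n), j_{n+1})$, using the bound $d_{\mathcal{S}}(y,x_s) \leq 2^{t_s + j_n}$ together with the level inclusion. Contiguity of $\tau_n$ with the anti-\v{C}ech map $\iota_n \colon B((x,t), j_n) \mapsto B((x,t), j_{n+1})$ follows from the same candidate vertex, since $j_{n+1} - j_n = 2 j_n$ exceeds the vertical spread $\max_s t_s - \min_s t_s \leq j_n$ of any simplex in $\mathcal{Y}_n$, so that $(y, \max_s t_s + j_n)$ is a common point of every column in $\iota_n(\sigma) \cup \tau_n(\sigma)$. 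As in Lemma~\ref{lem:X1}, contiguity forces $\tau_n$ and $\iota_n$ to induce the same map on $K_*(\abs{\cdot})$.

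Iterating $\tau_n$ produces, for each $m>n$, a simplicial map $\mathcal{Y}_n \to \mathcal{Y}_m$ with image supported on columns whose basepoint sits at vertical level at least $1 + \sum_{\ell=n}^{m-1} j_\ell$. Combining such a shift with a horizontal projection onto a fixed basepoint $x_i \in g_iP_{(i)}$ in each coset---which becomes a valid simplicial map once the column radius $2^{t+j_m}$ exceeds the relevant $d_{\mathcal{S}}$-distances from $x_i$---one factors the transition through a disjoint union over $i$ of ``vertical ray'' subcomplexes $\{B((x_i,t), j_m) : t \gg 0\}$, each of which is contractible. Since any $K$-homology class in $K_*(\abs{\mathcal{Y}_n})$ is represented by a cycle with bounded horizontal support in each coset it meets, for $m$ large enough the cycle lands in the contractible subcomplex and vanishes in $K_*(\abs{\mathcal{Y}_m})$; thus $\varinjlim K_*(\abs{\mathcal{Y}_n}) = 0$.

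The main obstacle is the horizontal-collapse step. Each coset $g_iP_{(i)}$ is infinite, so no single basepoint absorbs the cover uniformly, and the factorization must be carried out class by class. This forces one to verify that the $K$-homology theory governing the inductive system is indeed controlled by bounded-support cycles (which is the case, since $KX_*$ is built from direct limits of such locally finite complexes), and to check carefully that the combined vertical-shift-and-horizontal-collapse is a bona fide simplicial map contiguous to an appropriate iterate of $\iota_n$.
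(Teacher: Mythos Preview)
Your vertical shift $\tau_n$ and its contiguity with $\iota_n$ are fine, but the argument collapses at the horizontal step. The claim that ``any $K$-homology class in $K_*(\abs{\mathcal{Y}_n})$ is represented by a cycle with bounded horizontal support in each coset it meets'' is not justified and in fact is false for the analytic $K$-homology theory in use here. This theory satisfies the \emph{cluster axiom} (products, not direct sums, over countable disjoint unions---see the line just below the proof in the paper), which is the signature of a Steenrod--Sitnikov/Borel--Moore type theory rather than a compactly supported one. A single class can therefore have genuinely unbounded support inside a single horoball, and across the infinitely many horoballs there is no reason for any uniform bound. Without such a bound you cannot choose $m$ large enough, uniformly, to make the ``collapse to a vertical ray through $x_i$'' into a simplicial map contiguous to an iterate of $\iota_n$; the class-by-class workaround you propose presupposes exactly the compact-support behaviour that $K$-homology lacks.

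The paper avoids the horizontal direction entirely. It defines, for each integer $s\ge 0$, a simplicial map $q_{n,s}\colon \mathcal{Y}_n\to\mathcal{Y}_{n+1}$ that leaves columns with base height $\ge s$ alone and raises the others to height $s$; then $q_{n,0}$ is the transition map, and $q_{n,s}$ is contiguous to $q_{n,s+1}$ for every $s$. Gluing the resulting homotopies yields a proper map $q_n\colon \R_{\ge 0}\times\abs{\mathcal{Y}_n}\to\abs{\mathcal{Y}_{n+1}}$ through which the transition $\abs{\mathcal{Y}_n}\to\abs{\mathcal{Y}_{n+1}}$ factors (via the inclusion at $0$). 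Since $\R_{\ge 0}\times\abs{\mathcal{Y}_n}$ is flasque, its $K$-homology vanishes, so the transition map is zero on $K_*$ and the inductive limit is trivial. The key point you are missing is that one does not need to reach a contractible \emph{target}: it suffices to factor through a flasque \emph{source}, and the half-line parameter produced by the infinite chain of contiguities supplies exactly that.
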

\begin{proof}
For an integer $s\geq 0$, 
we define a simplicial map $q_{n,s}\colon \mathcal{Y}_n
 \rightarrow \mathcal{Y}_{n+1}$ by 
\begin{align*}
 q_{n,s}(B((x,t),j_n)) = \begin{cases}
		  B((x,t),j_{n+1}) & \text{ if } t\geq s,\\
		  B((x,s),j_{n+1}) & \text{ if } t < s.
		 \end{cases}
\end{align*}
Clearly $q_{n,s}$ and $q_{n,s+1}$ are contiguous. Let $h_{n,s}\colon
[s,s+1]\times \abs{\mathcal{Y}_n} \rightarrow \abs{\mathcal{Y}_{n+1}}$
be a proper homotopy between geometric realizations of $q_{n,s}$ and
$g_{n,s+1}$.  We define a proper map $q_n \colon \R_{\geq 0} \times
\abs{\mathcal{Y}_n} \rightarrow \abs{\mathcal{Y}_{n+1}}$ by 
$q_n(\theta,x) = h_{n,\lfloor \theta \rfloor}(\theta,x)$, 
where 
$\theta\in \R_{\geq 0}$, $x\in \abs{\mathcal{Y}_n}$, 
and $\lfloor \theta \rfloor$
denotes the largest integer not greater than $\theta$.  Then we have
the following commutative diagram:
\begin{align*}
 \xymatrix{
 \abs{\mathcal{Y}_n} \ar[rr] \ar@{^{(}->}[rd] && \abs{\mathcal{Y}_{n+1}}\\
 & \R_{\geq0} \times \abs{\mathcal{Y}_n} \ar^{q_n}[ru]
 }
\end{align*}
Here the horizontal arrow is the canonical map and the map 
$\abs{\mathcal{Y}_n} \hookrightarrow \R_{\geq0} \times
 \abs{\mathcal{Y}_n}$ 
is given by the inclusion onto $\{0\}\times \abs{\mathcal{Y}_n}$. 
 Since $\R_{\geq0} \times \abs{\mathcal{Y}_n}$ is contractible
 (see \cite[Remark 7.1.4]{MR1817560}), the  homomorphism 
 $K_*(\abs{\mathcal{Y}_n}) \rightarrow K_*(\abs{\mathcal{Y}_{n+1}})$ 
 factors through zero. Therefore, 
 $\varinjlim K_*(\abs{\mathcal{Y}_n}) = 0$.
\end{proof}

By the cluster axiom of K-homology 
(see \cite[Definition 7.3.1]{MR1817560}), we
have $K_*(\abs{\mathcal{Z}_n}) \cong \prod_{i\geq 1}
K_*(\abs{\mathcal{Z}_n^i})$. 
Therefore we have the following exact sequence: 
\begin{align}
\label{eq:injlim-Mayer-Vietoris}
\cdots \rightarrow \varinjlim \prod_{i\geq 1} K_*(\abs{\mathcal{Z}_n^i}) 
 \rightarrow KX_*(X(1)) 
 \rightarrow KX_*(\Xaug) \rightarrow \cdots.
\end{align}
We remark that $KX_*(X(1)) \cong KX_*(G)$ since $X(1)$ and $G$ are
coarsely equivalent. In the next section, we will show 
$\varinjlim \prod_{i\geq 1} K_*(\abs{\mathcal{Z}_n^i}) \cong
\prod_{i\geq 1}KX_*(g_iP_{(i)})$ with the aid of finite universal spaces 
$\EP_1,\dots,\EP_k$.

\section{Contractible models}
\label{sec:contractible-model}

In this section, we take $(G,\famP)$ in Theorem~\ref{main_theorem}.  Let
$\EG$ be a finite $G$-simplicial complex which is a universal space for
proper actions. For $r\in \{1,\dots,k\}$, let $\EP_r$ be a finite
$P_r$-simplicial complex which is a universal space for proper
actions. In the rest of this paper, we assume that all $\EP_{r}$ are
embedded in $\EG$.  We also assume that $G$ is naturally embedded in the
set of vertices of $\EG$ and $g_iP_{(i)}$ is embedded in $g_i\EP_{(i)}$.
If $(G,\famP)$ satisfies conditions in Theorem~\ref{main_theorem}, then
we can take $\EG$ satisfying these conditions (see
Appendix~\ref{appendix:finite-univ-space}). We take a finite subcomplex
$\Delta\subset \EG$ containing a fundamental domain of $\EG$.  We may assume
that $\Delta\cap \EP_r$ contains a fundamental domain of $\EP_r$ for
$r=1,\dots,k$ without loss of generality.


Now, we introduce a contractible model of $\Xaug$.
We define an embedding 
$\varphi_i \colon g_i\EP_{(i)}\times \{0\} \hookrightarrow \EG$ 
by $\varphi_i(x,0) = x$.

A contractible model for $\Xaug$ is obtained by pasting
$g_i\EP_{(i)}\times [0,\infty)$ to $\EG$ by $\varphi_i$ for all 
$i\in \N$. Thus we can write it as follows:
\begin{align*}
 \EX &= \EG\cup \bigcup_{i\in \N} (g_i\EP_{(i)}\times [0,\infty)).
\end{align*}
Contractible models for $X(1), Y(1)$ and
$\Horo(g_iP_{(i)};\{1\})$ are also defined as follows:
\begin{align*}
 EX(1) &= \EG\cup \bigcup_{i\in \N}  (g_i\EP_{(i)}\times [0,1]);\\
 EY(1) &= \bigsqcup_{i\in \N} (g_i\EP_{(i)}\times [1,\infty));\\
 EZ^i &= g_i\EP_{(i)}\times \{1\}.
\end{align*} 

%

We remark that $\EX$ admits a proper metric such that $\EX$ is coarsely
equivalent to $\Xaug$, but it is neither of bounded geometry nor 
uniformly contractible, if $\famP$ is not empty. 
Thus $\EX$ is not coarsening of $\Xaug$ in the sense of 
\cite[Definition 2.4]{MR1399087}. However $\EX$ is a ``weakly coarsening''
of $\Xaug$ in the following sense:
\begin{proposition}
\label{prop:K-homology-of-X}
 The coarse K-homology of $\Xaug$ can be computed by 
 the contractible model, that is, 
 $KX_*(\Xaug) \cong K_*(\EX)$.
\end{proposition}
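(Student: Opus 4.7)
The plan is to establish the isomorphism by comparing the coarse Mayer--Vietoris sequence~\eqref{eq:injlim-Mayer-Vietoris} with a topological Mayer--Vietoris sequence for $\EX = EX(1)\cup EY(1)$ (intersection $\bigsqcup_{i} EZ^{i}$). Since this is a decomposition by closed subcomplexes, locally finite $K$-homology produces
\begin{equation*}
 \cdots \to K_*\Bigl(\bigsqcup_{i} EZ^{i}\Bigr) \to K_*(EX(1))\oplus K_*(EY(1)) \to K_*(\EX) \to \cdots.
\end{equation*}
Each component $g_i\EP_{(i)}\times[1,\infty)$ of $EY(1)$ is flasque via the shift in the second coordinate, so a flasqueness argument parallel to the one giving $\varinjlim K_*(\abs{\mathcal{Y}_n})=0$ forces $K_*(EY(1))=0$. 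The cluster axiom then reduces the sequence to $\cdots \to \prod_{i} K_*(EZ^{i}) \to K_*(EX(1)) \to K_*(\EX) \to \cdots$, matching the shape of~\eqref{eq:injlim-Mayer-Vietoris}.

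Next I identify corresponding terms. For $K_*(EX(1))\cong KX_*(X(1))$, the obvious proper deformation retraction collapsing each collar $g_i\EP_{(i)}\times[0,1]$ onto $g_i\EP_{(i)}\times\{0\}\subset\EG$ yields $K_*(EX(1))\cong K_*(\EG)$. Because $\EG$ is a finite $G$-simplicial complex on which $G$ acts cocompactly, with any proper $G$-invariant metric it is coarsely equivalent to $G$ and uniformly contractible of bounded geometry, so $K_*(\EG)\cong KX_*(\EG)\cong KX_*(G)\cong KX_*(X(1))$, the last step using the coarse equivalence between $X(1)$ and $G$. The same argument applied to each $P_{(i)}$ identifies $K_*(EZ^{i})=K_*(g_i\EP_{(i)})$ with $KX_*(g_iP_{(i)})$. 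To match this with the coarse side, I check that the cover $\mathcal{Z}_{n}^{i}$ of the horosphere $\Horo(g_iP_{(i)};\{N_n\})$ by columns of size $j_n$ is, up to the coarse equivalence between that horosphere and $(g_iP_{(i)},d_i)$ with the induced metric rescaled by $2^{-N_n}$, an anti-\v{C}ech system; therefore $\varinjlim K_*(\abs{\mathcal{Z}_{n}^{i}})\cong KX_*(g_iP_{(i)})$, and the product is transported through the cluster axiom.

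Finally I apply the five lemma to the two sequences. For this I must produce a commutative ladder relating them, which requires a natural map of sequences. I construct one by defining, for each $n$, a simplicial map $\abs{\U_n}\to\EX$ that sends each vertex $B((x,t),j_n)$ of the nerve to $(x,t)\in\Xaug^{(0)}\subset\EX$ and extends across simplices using the uniform contractibility of $\EG$ and of each $g_i\EP_{(i)}\times[0,\infty)$; the scales at stage $n$ are controlled by bounded geometry of the finite simplicial pieces. This map respects the decomposition into $\mathcal{X}_n,\mathcal{Y}_n,\mathcal{Z}_n^{i}$, induces (in the limit) the identifications above, and commutes with the Mayer--Vietoris boundary maps on both sides.

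The hard part will be this last step: constructing the comparison map compatibly in $n$ and verifying naturality of the boundary maps, so that the identifications assemble into a genuine ladder of exact sequences. The flasqueness of $EY(1)$, the cluster axiom, and the identification $KX_*(\Gamma)\cong K_*(\underline{E}\Gamma)$ for a cocompact proper $\Gamma$-space are standard inputs; the combinatorial bookkeeping matching each column $B((x,t),j_n)$ with an appropriate simplex in $\EX$ while carrying the tripartite decomposition across consistently is the technical core of the argument.
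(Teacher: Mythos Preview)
Your overall architecture matches the paper's: compare the coarse Mayer--Vietoris sequence~\eqref{eq:injlim-Mayer-Vietoris} with the Mayer--Vietoris sequence for $\EX = EX(1)\cup EY(1)$, kill $K_*(EY(1))$, identify the $X(1)$ and $Z$ terms, and apply the five lemma. However, there is a genuine gap at the step where you write ``the product is transported through the cluster axiom.''

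What you have established is, for each fixed $i$, an isomorphism $K_*(EZ^i)\cong \varinjlim_n K_*(\abs{\mathcal{Z}_n^i})$. What you \emph{need} for the ladder is
\[
 \prod_{i\in\N} K_*(EZ^i)\;\cong\;\varinjlim_n \prod_{i\in\N} K_*(\abs{\mathcal{Z}_n^i}),
\]
and direct limits do not commute with infinite products in general. The cluster axiom only identifies $K_*(\abs{\mathcal{Z}_n})$ with $\prod_i K_*(\abs{\mathcal{Z}_n^i})$ at each stage; it says nothing about interchanging the limit in $n$ with the product over $i$. This is exactly the point the paper isolates: it builds auxiliary covers $E\mathcal{Z}_n^i$ of $EZ^i$, interleaves them with $\mathcal{Z}_n^i$, and then invokes the proof of \cite[Proposition~3.8]{MR1388312} to show that, \emph{after passing to a subsequence independent of $i$}, the partition-of-unity map $K_*(EZ^i)\to K_*(\abs{E\mathcal{Z}_n^i})$ is an isomorphism onto the image of the structure map. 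The uniformity in $i$ is what makes the limit commute with the product, and it is available precisely because there are only finitely many models $\EP_1,\dots,\EP_k$ controlling all cosets. Your argument, which treats each $i$ separately via ``$\EP_{(i)}$ is uniformly contractible of bounded geometry,'' does not supply this uniformity.

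A secondary issue: your comparison map runs $\abs{\U_n}\to\EX$, extended over simplices ``using the uniform contractibility of $\EG$ and of each $g_i\EP_{(i)}\times[0,\infty)$.'' But as the paper remarks, $\EX$ itself is \emph{not} uniformly contractible, and a simplex of $\abs{\U_n}$ centered near depth $0$ can involve vertices from $\Gamma$ and from several distinct horoballs simultaneously, so the piecewise contractibility does not obviously suffice to extend. The paper avoids this by going the other way, via a partition of unity $\EX\to\abs{E\U_n}$ followed by a simplicial map $\abs{E\U_n}\to\abs{\U_{n+1}}$; this direction requires no contractibility hypothesis on the target.
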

Proposition
 \ref{prop:K-homology-of-X} is no direct
 consequence of ~\cite[Proposition 3.8]{MR1388312}. Our strategy is
 cutting off horoballs by Mayer-Vietoris arguments.

\subsection{Proof of Proposition~\ref{prop:K-homology-of-X}}
\label{sec:proof-prop-refpr}
We construct a locally
finite cover $E\U_n$ of $\EX$ as follows:
for $x\in g_iP_{(i)}$ and $j\geq 1$, 
the ball in $g_i\EP_{(i)}$ centered at
$x$ with
the size $j$ is
\begin{align}
\label{def:EB}
  EB(x,j) = \bigcup y(\Delta\cap \EP_{(i)})
\end{align}
where the union is taken over all $y \in g_iP_{(i)}$ such that
$d_{\mathcal{S}}(x,y) \leq 2^{j}$.
A contractible column
centered at $(x,t) \in g_iP_{(i)} \times \N$ with the size $j$ is 
\[
 EB((x,t),j) = EB(x,t+j) \times [t,t+j].
\]
For $x\in G$, a contractible column centered at 
$(x,0) \in G\times \{0\}$ with the size $j$ is 
\[
 EB((x,0),j) = \bigcup 
\left(y\Delta \cup \bigcup_{i \in \N}\big((y\Delta\cap g_i\EP_{(i)} ) 
\times [0,j]\big)\right)
\]
where the first union is taken over all $y\in G$ such that
$d_{\mathcal{S}}(x,y)\leq 2^j$.  We define that the cover $E\U_n$ of
$\EX$ consists of all those columns $EB((x,t),j_n)$ for 
$(x,t)\in \Xaug^{(0)}$.  
Taking subsequence if necessary,
we define a simplicial map $E\U_n \rightarrow \U_{n+1}$ by
$EB((x,t),j_n) \mapsto B((x,t),j_{n+1})$.

A partition of the unity gives a continuous map
$h_n\colon \EX \rightarrow \abs{E\U_n}$. The composite of $h_2$ and 
$\abs{E\U_2}\rightarrow \abs{\U_3}$
induces a homomorphism $K_*(\EX)\rightarrow KX_*(\Xaug)$.

Next, 
for each $i \in \N$, we construct an anti-\v{C}ech system
$\{E\mathcal{Z}_n^i\}_n$  
of $EZ^i$ as follows:
the cover $E\mathcal{Z}_n^i$ of $EZ^i$
consists of all balls $EB(x,j_n)\times \{1\}$ for $x\in g_iP_{(i)}$.  
Then $\{E\mathcal{Z}_n^i\}_n$ forms an anti-\v{C}ech system.

We define a simplicial map 
$\mathcal{Z}_n^i \rightarrow E\mathcal{Z}_{n+1}^i$ by 
$B((x,s),j_n) \mapsto EB(x,j_{n+1})\times \{1\}$. 
We also define a simplicial map 
$E\mathcal{Z}_n^i \rightarrow \mathcal{Z}_{n+1}^i$ by 
$EB(x,j_n)\times \{1\} \mapsto B((x,1),j_{n+1})$. Then we have a
commutative diagram
\begin{align*}
 \xymatrix{
 \prod_{i \in \N} K_*(\abs{\mathcal{Z}_n^i}) \ar[r] \ar[d]&
 \prod_{i \in \N} K_*(\abs{E\mathcal{Z}_{n+1}^i}) \ar[d] \ar[ld]\\
 \prod_{i \in \N} K_*(\abs{\mathcal{Z}_{n+2}^i}) \ar[r] &
 \prod_{i \in \N} K_*(\abs{E\mathcal{Z}_{n+3}^i}).
 }
\end{align*}
Hence $\varinjlim \prod_{i \in \N} K_*(\abs{\mathcal{Z}_n^i})
\cong \varinjlim \prod_{i \in \N} K_*(\abs{E\mathcal{Z}_n^i})$.
The partition of the unity gives a continuous map
$h^i_{n}\colon EZ^i \rightarrow \abs{E\mathcal{Z}_n^i}$ for
$i$ and $n\geq 1$.
By the
proof of \cite[Proposition 3.8]{MR1388312}, 
taking a subsequence if
necessary (not depending on $i$), 
the induced map 
$(h^i_{n})_*\colon K_*(EZ^i) \rightarrow K_*(\abs{E\mathcal{Z}_n^i})$ 
is an isomorphism onto the image of the map 
$K_*(\abs{E\mathcal{Z}_{n-1}^i}) \rightarrow K_*(\abs{E\mathcal{Z}_{n}^i})$.
See also \cite[Lemma 7.11]{decomposition-complexity}. 
It follows that 
\begin{align}
\label{eq:products_commutes_limit}
  \prod_{i \in \N}K_*(EZ^i) \cong
  \varinjlim \prod_{i \in \N} K_*(\abs{E\mathcal{Z}_n^i}) 
  \cong \varinjlim \prod_{i \in \N} K_*(\abs{\mathcal{Z}_n^i}). 
\end{align}

By arguments similar to that in the case of $EZ^i$, we can 
show the following isomorphism:
\begin{align}
\label{eq:EG}
 K_*(EX(1)) \cong \varinjlim K_*(\abs{\mathcal{U}(1,j_n)}) = KX_*(X(1)).
\end{align}


By the Mayer-Vietoris sequence for $\EX = EX(1) \cup EY(1)$, the exact
sequence~(\ref{eq:injlim-Mayer-Vietoris})
and the fact that $K_*(EY(1)) = 0$,
we have the following commutative diagram with two horizontal
exact sequences:
\begin{align}
\xymatrix{
\ar[r] & \prod_{i \in \N} K_*(EZ^i(1)) \ar[r] \ar[d]
 & K_*(EX(1)) \ar[r] \ar[d]
 & K_*(\EX) \ar[r] \ar[d] & \\
\ar[r] &  \varinjlim \prod_{i \in \N} K_*(\abs{\mathcal{Z}_n^i})\ar[r]
 & KX_*(X(1)) \ar[r]
 & KX_*(\Xaug) \ar[r] &.
}
\end{align}
By ~(\ref{eq:products_commutes_limit}),~(\ref{eq:EG}) and the five lemma,
all vertical maps are isomorphisms. This completes the proof of
Proposition~\ref{prop:K-homology-of-X}.

\section{Coarse Mayer-Vietoris sequences}
Higson, Roe and Yu~\cite{MR1219916} introduced a coarse Mayer-Vietoris
sequence in the K-theory of the Roe algebras. It is used to prove a
Lipschitz homotopy invariance of the K-theory of the Roe
algebras~\cite[Theorem 9.8]{MR1399087}. 


We first recall a notion of ``excision pair'' in coarse category. 
For a metric space $M$, a subspace $A$, and a positive number $R$, we
denote by $\Pen(A;R)$ the $R$-neighbourhood of $A$ in $M$, that is,
$\Pen(A;R) = \{p\in M: d(p,A)\leq R\}$.
\begin{definition}
 Let $M$ be a proper metric space, and let $A$ and $B$ be closed
 subspaces with $M = A\cup B$. We say that 
 $M= A\cup B$ is an $\omega$-excisive
 decomposition, if for each $R> 0$ there exists some $S> 0$ such that 
\[
 \Pen(A;R)\cap \Pen(B;R) \subset \Pen(A\cap B; S).
\]
\end{definition}
We summarize results in \cite{MR1219916} (see also \cite{MR1834777} and
\cite{MR2012966}) on
 coarse assembly maps and Mayer-Vietoris sequences as follows:
\begin{theorem}
 Suppose that $M=A \cup B$ is an $\omega$-excisive decomposition. 
Then the following diagram is commutative and horizontal sequences are exact:
\begin{align*}
\xymatrix@1{
\ar[r]& KX_p(A\cap B) \ar[r] \ar[d]
 & KX_p(A) \oplus KX_p(B) \ar[r] \ar[d]
 & KX_p(M) \ar[r] \ar[d] & KX_{p-1}(A\cap B) \ar[r] \ar[d] &\\
\ar[r] & K_p(C^*(A\cap B)) \ar[r]
 & K_p(C^*(A))\oplus K_p(C^*(B)) \ar[r]
 & K_p(C^*(M)) \ar[r] & K_{p-1}(C^*(A\cap B)) \ar[r]. &
}
\end{align*}
Here vertical arrows are coarse assembly maps.
\end{theorem}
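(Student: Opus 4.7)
The plan is to establish the two horizontal exact sequences independently, then verify that the squares commute by appealing to naturality of the coarse assembly map. Both sequences arise as six-term $K$-theory sequences of a pullback/pushout diagram -- of $C^*$-ideals for the bottom row, of anti-\v{C}ech complexes for the top -- and in each case the role of the $\omega$-excisive hypothesis is to identify the ``intersection'' term with the object built intrinsically from $A\cap B$.

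For the bottom row I would, for each $R>0$, introduce the ideal $I_A(R)\subset C^*(M)$ generated by locally compact, finite-propagation operators whose support lies in $\Pen(A;R)\times \Pen(A;R)$, and analogously $I_B(R)$ and $I_{A\cap B}(R)$; their inductive limits over $R$ recover $C^*(A)$, $C^*(B)$ and $C^*(A\cap B)$ as ideals inside $C^*(M)$. A partition-of-unity argument chopping an operator by its propagation gives $C^*(A)+C^*(B)=C^*(M)$, and the $\omega$-excisive condition is used to conclude that $I_A(R)\cap I_B(R)\subset I_{A\cap B}(S)$ for some $S=S(R)$, so that in the limit $C^*(A)\cap C^*(B)=C^*(A\cap B)$. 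The resulting short exact sequence
\[
 0 \rightarrow C^*(A\cap B) \rightarrow C^*(A) \oplus C^*(B) \rightarrow C^*(M) \rightarrow 0
\]
then yields the bottom row via the standard six-term $K$-theory sequence.

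For the top row I would use the anti-\v{C}ech description $KX_*(M)=\varinjlim K_*(\abs{\U_n})$, and at each level decompose $\U_n$ into subcomplexes $\mathcal{A}_n, \mathcal{B}_n, \mathcal{C}_n$ of cells meeting $A$, $B$, $A\cap B$ respectively. The $\omega$-excisive condition, combined with the freedom to pass to a cofinal subsystem, gives that $(\mathcal{A}_n,\mathcal{B}_n)$ is an excision pair of covers with $\mathcal{A}_n\cap \mathcal{B}_n=\mathcal{C}_n$ after refinement, yielding a simplicial Mayer-Vietoris sequence at each stage. Taking $\varinjlim$, which is exact, produces the top row and simultaneously identifies its terms with $KX_*$ of the four spaces in question.

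The main technical obstacle is the bookkeeping of the constants $R$ and $S$: the $\omega$-excisive inclusion only holds after enlarging the scale, so one must verify that the would-be identifications $I_A(R)\cap I_B(R)\cong I_{A\cap B}(S)$ and $\mathcal{A}_n\cap \mathcal{B}_n\cong \mathcal{C}_n$ become genuine equalities only in the respective inductive limits, while the maps in the Mayer-Vietoris sequence are defined at finite stages. Once this is arranged, commutativity of each square is formal: the coarse assembly map is natural under coarse inclusions $A\cap B\hookrightarrow A,B\hookrightarrow M$, and the naturality of connecting homomorphisms under morphisms of the underlying short exact sequences, together with the standard chain-level description of the assembly map, identifies the two connecting homomorphisms.
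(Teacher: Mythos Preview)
The paper does not give its own proof of this theorem: it is stated as a summary of known results and attributed to Higson--Roe--Yu~\cite{MR1219916} (with~\cite{MR1834777} and~\cite{MR2012966} for the compatibility with the assembly map). Your sketch is essentially the argument found in those references, so in that sense it agrees with what the paper invokes.

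A small comment on your outline: the identifications on the analytic side are typically phrased not in terms of $C^*(A)$ literally sitting inside $C^*(M)$, but rather via the ideal $C^*(A\subset M)\triangleleft C^*(M)$ of operators supported near $A$; one then checks $C^*(A\subset M)\cong C^*(A)$ in $K$-theory. Your version with the $I_A(R)$ is equivalent, but be careful that the inclusion $C^*(A)\hookrightarrow C^*(M)$ as a genuine subalgebra depends on choices of representation. The one place where ``formal naturality'' is not quite enough is the square involving the connecting maps: the boundary map on the coarse $K$-homology side is constructed via simplicial Mayer--Vietoris on nerves, while the boundary on the $C^*$-side comes from the six-term sequence of an ideal inclusion, and matching these requires tracing the assembly map through the Paschke/Roe dual picture. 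This is exactly what~\cite{MR1834777} and~\cite{MR2012966} address, so citing them (as the paper does) is the honest way to close that gap.
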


\section{Proof of theorem~\ref{main_theorem}}


In this section, we give a proof of Theorem~\ref{main_theorem}, which is
divided into two parts. In the first part, we show inductively the coarse
Baum-Connes conjecture for the space obtained by removing the first $n-1$
horoballs from $\Xaug$. In the second part, we compute the coarse
K-homology and the K-theory of the Roe algebra
of $G$ which is the intersection of a decreasing sequence of subspaces
of $\Xaug$.
\subsection{The first part}
\label{sec:induction-part}
\begin{notation}
\label{notation:X_n} 
 We introduce the following notations:
\begin{align*}
 X_n &= \Gamma \cup \bigcup_{i \geq n}\Horo(g_iP_{(i)});\\
 X_\infty &= \bigcap_{n\geq 1} X_n;\\
 EX_n &= 
   \EG \cup \bigcup_{i \geq n}
     (g_i\EP_{(i)}\times [0,\infty))\\
 EX_\infty &= \bigcap_{n\geq 1} EX_n.
\end{align*}
\end{notation}

We remark that $X_1 = \Xaug$, $X_\infty = \Gamma$, 
$EX_1 =\EX$ and $EX_\infty = \EG$.

Since $X_1$ is $\delta$-hyperbolic for some $\delta \geq 0$, by the result of
Higson-Roe~\cite[Corollary 8.2]{MR1388312}, the coarse assembly map
$\mu \colon KX_*(X_1) \rightarrow K_*(C^*(X_1))$ is an isomorphism. 
See Appendix~\ref{appendix:coarse-baum-connes-for-hyp}.
In fact, by Proposition~\ref{prop:K-homology-of-X}, the coarse assembly map
\begin{align}
\label{eq:3}
 \mu \colon K_*(EX_1) \rightarrow K_*(C^*(X_1))
\end{align}
is an isomorphism.
By assumption and \cite[Proposition 3.8]{MR1388312}, $\mu\colon
K_*(g_n\EP_{(n)})\rightarrow K_*(C^*(g_nP_{(n)}))$ is an isomorphism for
all $n \geq 1$.
\begin{lemma}
\label{lem:induction}
For any $n\geq 0$, the coarse assembly map 
$\mu_n\colon K_*(EX_n)\rightarrow K_*(C^*(X_n))$ is an isomorphism.
\end{lemma}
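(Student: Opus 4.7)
The plan is to induct on $n$, with the base case $n = 1$ supplied by equation~(\ref{eq:3}). For the inductive step, assume $\mu_n$ is an isomorphism and deduce the same for $\mu_{n+1}$ from the parallel decompositions
\begin{align*}
 X_n &= X_{n+1} \cup \Horo(g_nP_{(n)}), & X_{n+1} \cap \Horo(g_nP_{(n)}) &= g_nP_{(n)},\\
 EX_n &= EX_{n+1} \cup (g_n\EP_{(n)} \times [0,\infty)), & EX_{n+1} \cap (g_n\EP_{(n)} \times [0,\infty)) &= g_n\EP_{(n)}\times\{0\}.
\end{align*}
The coarse decomposition is $\omega$-excisive: any path in $X_n$ from a horoball vertex $(p,l)$ to $X_{n+1}$ must transit through $g_nP_{(n)}\times\{0\}$, so $d((p,l),X_{n+1})\geq l$, whence $(p,l)\in \Pen(g_nP_{(n)};R)$ once this distance is $\leq R$.

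The crucial observation is that both horoball pieces are flasque. The vertical shift $(p,l)\mapsto(p,l+1)$ on $\Horo(g_nP_{(n)})$ is bornologous and close to the identity, giving $K_*(C^*(\Horo(g_nP_{(n)})))=0$ by an Eilenberg swindle; the analogous translation on the $[0,\infty)$-factor makes $g_n\EP_{(n)}\times[0,\infty)$ flasque as a locally compact space, so its locally finite K-homology also vanishes. Hence the assembly map for the horoball piece is trivially the isomorphism $0\to 0$, and the one for $g_nP_{(n)}$ is an isomorphism by the hypothesis on $P_r$ combined with~\cite[Proposition 3.8]{MR1388312}.

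Next, stack the topological K-homology Mayer-Vietoris sequence for $EX_n$ above the coarse Mayer-Vietoris sequence of Section~4 for $X_n$, joined vertically by coarse assembly maps; naturality makes the ladder commute. Four of every five consecutive verticals are then isomorphisms (two copies of $\mu_{g_nP_{(n)}}$, the trivial one for the horoball piece, and $\mu_n$ by induction), so the five lemma forces the vertical $\mu_{X_{n+1}}\oplus\mu_{\Horo(g_nP_{(n)})}$ to be an isomorphism, and in view of the trivial summand $\mu_{n+1}$ is itself an isomorphism. The main technical obstacle will be rigorously producing this ladder: one must extend the identification $K_*(EX_m)\cong KX_*(X_m)$ of Proposition~\ref{prop:K-homology-of-X} to each piece of the decomposition so that the topological and coarse Mayer-Vietoris sequences are genuinely compatible via the assembly map.
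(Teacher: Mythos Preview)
Your proposal is correct and follows essentially the same approach as the paper: induct on $n$ with base case~(\ref{eq:3}), use the $\omega$-excisive decomposition $X_n = X_{n+1}\cup \Horo(g_nP_{(n)})$, vanishing for the horoball piece, the assumed isomorphism for $g_nP_{(n)}$, and the five lemma applied to the (coarse) Mayer-Vietoris ladder. You supply considerably more detail than the paper's three-line proof---the flasque argument, the explicit $\omega$-excisiveness check, and especially the honest flag about compatibility of the topological and coarse Mayer-Vietoris sequences via the identification of Proposition~\ref{prop:K-homology-of-X}---but the skeleton is identical.
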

\begin{proof}
 We assume that $\mu_n$ is an isomorphism. 
 Since 
$X_n = X_{n+1}\cup \Horo(g_{n}P_{(n)})$
 is an $\omega$-excisive 
 decomposition, it follows from
 (coarse) Mayer-Vietoris sequences and the five lemma 
 that $\mu_{n+1}$ is an isomorphism.
\end{proof}

\subsection{The second part}
\label{sec:projective-limit-part}
Let $(EX_n)^{+}$ denote the one-point compactification of $EX_n$.
 It is clear that $(EX_\infty)^{+} = \bigcap_{n\in \N} (EX_n)^{+}$.
By the Milnor exact sequence \cite[Proposition7.3.4]{MR1817560}, we have
\begin{align}
\label{eq:Milnor-K-homology}
  0\rightarrow \limone K_{p+1}((EX_n)^{+}) \rightarrow K_p((EX_\infty)^{+}) 
 \rightarrow \varprojlim K_p((EX_n)^{+}) \rightarrow 0.
\end{align}
Since the K-homology of $EX_n$ is just the reduced K-homology of
$(EX_n)^{+}$, we have $K_*((EX_n)^{+}) \cong K_*(EX_n) \oplus
K_*(\{+\})$ where $\{+\}$ denotes a one-point space. This is also a
direct consequence of an exact 
sequence~\cite[Definition7.1.1(b)]{MR1817560}. Thus we can replace
$K_*((EX_n)^{+})$ in (\ref{eq:Milnor-K-homology}) by $K_*(EX_n)$.


Next, we consider the K-theory of the Roe algebras. 
Let $\mathrm{H}$ be a Hilbert space and $\rho\colon C_0(X_1)
\rightarrow \mathfrak{B}(\mathrm{H})$ is an ample representation where
$\mathfrak{B}(\mathrm{H})$ is the set of all bounded operators on
$\mathrm{H}$.
The Roe algebra
$C^*(X_1,\mathrm{H})$ is the norm closure of the algebra of locally
compact, controlled operators on $\mathrm{H}$ 
(see \cite[Definition 6.3.8]{MR1817560}). The restriction
$\rho\colon C_0(X_n) \rightarrow
\mathfrak{B}(\overline{C_0(X_n)\mathrm{H}})$ gives 
an ample representation of $C_0(X_n)$. The Roe algebra
$C^*(X_n,\overline{C_0(X_n)\mathrm{H}})$ can be naturally identified with
a sub-$C^*$-algebra of $C^*(X_1,\mathrm{H})$, in fact, we have
\[
 C^*(X_n,\overline{C_0(X_n)\mathrm{H}}) = \{T\in C^*(X_1,\mathrm{H}): 
 \supp T \subset X_n\times X_n\}.
\]
We abbreviate $C^*(X_n,\overline{C_0(X_n)\mathrm{H}})$ to $C^*(X_n)$.
Now it is easy to see that $C^*(X_{\infty}) = \bigcap_{n\geq 1}C^*(X_{n}).$

Phillips~\cite{MR1050490} studied the
K-theory of the projective limit of $C^*$-algebras.
\begin{proposition}[{\cite[Theorem 5.8(5)]{MR1050490}}]
\label{prop:Phillips}
The following sequence is exact. 
\begin{align*}
 \label{eq:2}
0\rightarrow \limone K_{p+1}(C^*(X_n)) \rightarrow
 K_p(C^*(X_\infty))
 \rightarrow  \varprojlim K_p(C^*(X_n)) \rightarrow  0.
\end{align*}
\end{proposition}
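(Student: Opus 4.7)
The plan is to cast the setup as a countable inverse system of $C^*$-algebras and to invoke Phillips' Milnor-type $\limone$ exact sequence for the $K$-theory of projective limits of $\sigma$-$C^*$-algebras, applied to this specific system. The proposition is stated as a direct consequence of Phillips' \cite[Theorem 5.8(5)]{MR1050490}, so the work lies entirely in checking that our setting fits his framework.

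First, I would identify $C^*(X_\infty)$ with the projective limit $\varprojlim_n C^*(X_n)$ of the inverse system whose connecting maps are the isometric inclusions $C^*(X_{n+1}) \hookrightarrow C^*(X_n)$. The discussion preceding the proposition already exhibits each $C^*(X_n)$ as a sub-$C^*$-algebra of the ambient Roe algebra $C^*(X_1,\mathrm{H})$ with the induced norm, and shows that $C^*(X_\infty) = \bigcap_{n\geq 1} C^*(X_n)$. Since a common norm is inherited from $C^*(X_1,\mathrm{H})$, the $\sigma$-$C^*$-algebraic projective limit in Phillips' sense coincides with the norm-closed intersection, so indeed $C^*(X_\infty)=\varprojlim_n C^*(X_n)$ as $\sigma$-$C^*$-algebras, and this projective limit is in fact an honest $C^*$-algebra.

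Second, I would apply Phillips' theorem: for any countable inverse system $\{A_n\}$ of $\sigma$-$C^*$-algebras one obtains a natural short exact sequence
\[
 0 \to \limone K_{p+1}(A_n) \to K_p(\varprojlim_n A_n) \to \varprojlim_n K_p(A_n) \to 0.
\]
Substituting $A_n=C^*(X_n)$ and invoking the identification of the previous step yields the desired sequence, provided one knows that Phillips' $K$-theory for the $\sigma$-$C^*$-algebra $\varprojlim_n C^*(X_n)$ agrees with the ordinary $C^*$-algebraic $K$-theory of $C^*(X_\infty)$. This holds here because the projective limit is itself an ordinary $C^*$-algebra, so Phillips' definition specializes accordingly.

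The main obstacle is verifying these compatibility points. In particular, one must check that Phillips' framework accepts an inverse system whose structure maps are merely isometric injections rather than surjections, and that no implicit countability or separability hypotheses are violated by our choice of ample representation $\rho\colon C_0(X_1)\to\mathfrak{B}(\mathrm{H})$. Once these formal checks are in place, the exact sequence follows without any further input from the geometry of $(G,\famP)$; the geometric content of the argument in Section~\ref{sec:projective-limit-part} is already absorbed into the identification $C^*(X_\infty) = \bigcap_n C^*(X_n)$ carried out earlier.
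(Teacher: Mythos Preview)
Your proposal is correct and matches the paper's approach: the paper gives no proof at all for this proposition, simply citing Phillips' result \cite[Theorem 5.8(5)]{MR1050490} after establishing $C^*(X_\infty)=\bigcap_{n\geq 1}C^*(X_n)$ in the paragraph preceding it. Your write-up is in effect an expansion of that one-line citation, spelling out why the intersection is the projective limit and why Phillips' $\sigma$-$C^*$-algebra $K$-theory agrees with ordinary $K$-theory here; this is exactly the verification the paper leaves implicit.
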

By Proposition~\ref{prop:Phillips} and (\ref{eq:Milnor-K-homology}),
 we have the following commutative diagram such that upper and lower
 horizontal sequences are exact:
\begin{eqnarray*}
 \xymatrix{
   0  \ar[r]& \limone K_{p+1}(EX_n) \ar[d] \ar[r]
    & K_p(EX_\infty) \ar[d]\ar[r] 
    & \varprojlim K_p(EX_n) \ar[d] \ar[r]
    & 0 .\\
  0 \ar[r]& \limone K_{p+1}(C^*(X_n)) \ar[r]& 
   K_p(C^*(X_\infty)) \ar[r]
    & \varprojlim K_p(C^*(X_n)) \ar[r] & 0.
}
\end{eqnarray*}

By Lemma~\ref{lem:induction} and the five lemma, every vertical map is
an isomorphism. This completes the proof of Theorem~\ref{main_theorem}.

\begin{remark}
 In the proof of Theorem~\ref{main_theorem}, we use
 $\delta$-hyperbolicity of the augmented space only for the first step
 of the induction in section~\ref{sec:induction-part} and the existence
 of a universal space $\EG$ mentioned in the beginning of
 Section~\ref{sec:contractible-model}.
\end{remark}

\appendix
\section{A finite universal space for proper actions of a relatively
 hyperbolic group}
\label{appendix:finite-univ-space}
In this appendix we prove the following 
(refer to~\cite[Theorem 0.1]{MR1974394} on the case of torsion free
groups):
\begin{theorem}\label{finite}
Let a countable group 
$G$ be hyperbolic relative to a finite family of infinite subgroups 
$\famP$. 
Suppose that every $P\in\famP$ admits a finite $P$-simplicial complex 
which is a universal space for 
proper actions.
Then $G$ admits a finite $G$-simplicial complex 
which is a universal space for 
proper actions. In fact, $G$ has a  finite
$G$-simplicial complex $\EG$ with an embedding 
$i\colon G\hookrightarrow \EG$ and each $P\in \famP$ has a finite
 $P$-simplicial complex $\EP$ which is a subcomplex of $\EG$ such that 
$i(P) \subset \EP$.
\end{theorem}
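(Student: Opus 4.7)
The plan is to bootstrap from the hyperbolicity of the augmented space $\Xaug$. Since $\Xaug$ is $\delta$-hyperbolic and $G$ acts properly on it (freely on vertices, in fact), a Rips-type complex built from $\Xaug$ is contractible and $G$-proper but not $G$-cocompact: each horoball contributes infinitely many orbits of cells. The strategy is therefore to first collapse the horoball tails into single ``parabolic'' vertices to obtain a proper \emph{cocompact} contractible $G$-complex, and then equivariantly replace those parabolic vertices by the prescribed finite models $\EP_r$.

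First, fix a sufficiently large level $N$ and form a graph $\widetilde{K}$ from $\Xaug$ by, for each $i\in \N$, removing the interior of $\Horo(g_iP_{(i)};[N,\infty))$ and coning off the remaining slice $\Horo(g_iP_{(i)};\{N\})$ to a single new vertex $v_i$. The $G$-action extends to $\widetilde K$ with $\mathrm{Stab}(v_i) = g_iP_{(i)}g_i^{-1}$, and $\widetilde K$ has only finitely many $G$-orbits of vertices and edges, since the only infinite orbits in $\Xaug$ came from the horoballs. Hyperbolicity of $\Xaug$ survives the coning off (with a possibly worse but uniform constant), so the flag Rips complex $P_d(\widetilde K)$ for $d$ large compared to the hyperbolicity constant is a finite-dimensional, $G$-cocompact, contractible simplicial complex on which $G$ acts simplicially; the action is already proper away from the orbits of the $v_i$.

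Next, represent the finitely many $G$-orbits of parabolic vertices by $v_1,\dots,v_k$ with $\mathrm{Stab}(v_r)=P_r$. The link $\mathrm{lk}(v_r)$ in $P_d(\widetilde K)$ carries a proper $P_r$-action, so by the universal property of $\EP_r$ there is a $P_r$-equivariant map $\mathrm{lk}(v_r)\to \EP_r$, unique up to $P_r$-equivariant homotopy. Replace $v_r$ in $P_d(\widetilde K)$ by the mapping cylinder of this map glued to $\EP_r$, and propagate the replacement $G$-equivariantly over the orbit $G\cdot v_r$. Doing this for $r=1,\dots,k$ and passing to a simplicial subdivision yields the desired complex $\EG$: it is finite (each $\EP_r$ is finite and there are finitely many parabolic orbits), the $G$-action is proper everywhere (by properness of $P_r\curvearrowright\EP_r$ at the inserted pieces), and $\EG$ is contractible because each $v_r$'s star was contractible and is replaced by a space built from the contractible $\EP_r$. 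By the standard characterization, $\EG$ is a universal space for proper actions. The vertex inclusion $G\hookrightarrow \widetilde K^{(0)}\subset \EG$ and the built-in inclusion $\EP_r\subset\EG$ give the compatibility $i(g_iP_{(i)})\subset g_i\EP_{(i)}$.

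The main obstacle lies in two technical points. First, one must verify that coning off horoball slices preserves hyperbolicity of the Cayley-graph part plus the truncated horoballs, which is a thin-triangle estimate using the $N$-deep geometry of combinatorial horoballs. Second, and more delicate, one must perform the parabolic replacement so that the attachment is simplicial, $G$-equivariant, and yields a cocompact complex while remaining contractible; handling this in the equivariant CW category and only at the end passing to a simplicial subdivision keeps the bookkeeping manageable and guarantees the required embedding $i\colon G\hookrightarrow \EG$ with $i(P)\subset \EP$ for each $P\in\famP$.
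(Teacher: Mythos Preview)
Your overall strategy---build a contractible $G$-complex from the hyperbolic augmented space and then perform equivariant surgery to insert the given $\EP_r$'s via mapping cylinders---matches the paper's in spirit, but the execution diverges in a way that creates a genuine gap.

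The paper takes the Rips complex of the \emph{full} augmented space $V(G,\famP,d_G)$ and, invoking the Meintrup--Schick result (Proposition~\ref{Rips}), knows from the outset that this Rips complex is an $\underline{E}G$. It then replaces the (non-cocompact) horoball tails $R_D(V(P,\{P\},d_P))_r$---which are themselves $\underline{E}P$'s by the same proposition, since each horoball is isometrically embedded and hence $\delta$-hyperbolic---by the finite $\EP$'s through a simplicial $P$-map $f_P$ and its mapping cylinder. The key point is that the resulting complex is exhibited as \emph{$G$-homotopy equivalent} to the original Rips complex, so it inherits the $\underline{E}G$ property automatically. No separate verification of fixed-point sets is needed, and the hyperbolicity used is only that of $\Xaug$ itself.

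Your route instead cones off the deep horoballs first, passes to the Rips complex $P_d(\widetilde K)$ of the coned-off graph, and then replaces the parabolic cone points. This introduces two problems. The first you flag yourself: you need $\widetilde K$ hyperbolic, which (while true, essentially because coning off uniformly quasiconvex horoballs preserves hyperbolicity, or because $\widetilde K$ is quasi-isometric to the coned-off Cayley graph) requires a real argument you do not supply. The paper's approach sidesteps this entirely.

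The second, more serious, gap is your final step: after the surgery you assert that $\EG$ is proper and contractible and conclude ``by the standard characterization'' that it is a universal space for proper actions. But contractibility plus a proper action does \emph{not} characterize $\underline{E}G$: the antipodal $\Z/2$-action on $S^\infty$ is free and $S^\infty$ is contractible, yet $(S^\infty)^{\Z/2}=\emptyset$, so it is not a model for $\underline{E}(\Z/2)$. What is actually required is that $(\EG)^H$ be contractible for every \emph{finite} $H\le G$. You never verify this, and it is not automatic from your construction: you would need to check that the Meintrup--Schick barycenter argument still gives contractible $H$-fixed sets in $P_d(\widetilde K)$ (it does, since the argument only uses finiteness of $H$ and hyperbolicity), and then that your mapping-cylinder surgery preserves contractibility of $H$-fixed sets (which needs the observation that $(\EP_r)^H$ is contractible for finite $H\le P_r$). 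Alternatively---and this is what the paper does---you could exhibit a $G$-homotopy equivalence between your $\EG$ and a known model such as $R_D(V(G,\famP,d_G))$, but you have not set things up to do that.
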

\noindent
See~\cite{MR2195456} for universal spaces for proper actions. 

Let a countable group $G$ be finitely generated relative to
a finite family of infinite subgroups $\famP$. 
We denote the family of all left cosets by $\a:=\bigsqcup_{P\in\famP}G/P$.
We take a left invariant, proper metric $d_G$ on $G$ such that 
$G$ is generated by 
$\{g\in G \ |\ d_G(e,g)\le 1\}\cup\bigcup_{P\in\famP}P$.
We remark that  $\{g\in G \ |\ d_G(e,g)\le 1\}$
is a finite set.

Now we recall the definition of
the augmented space 
$X(G,\famP,d_G)$ (see~\cite[Section 3]{MR2448064} and
also~\cite{MR2684983}).
Its vertex set $V(G,\famP,d_G)$ is 
$G\sqcup \bigsqcup_{A\in\a}(A\times \N)$ where $\N$ is the set of
positive integers.
We often denote the subset $G\subset
V(G,\famP)$ by $G\times \{0\}$.
Also we often regard $A\in \a$ as a subset $A\times \{0\}$ of 
$G\times \{0\}$.  
Its edge is either a vertical edge or a horizontal edge: 
a vertical edge is a pair 
$\{(a,t_1),(a,t_2)\}\subset A\times (\{0\}\sqcup\N)$
such that $|t_1-t_2|=1$ for $A\in \a$; 
a horizontal edge is a pair 
$\{(a_1,t),(a_2,t)\}\subset A\times \N$
such that $0<d_G(a_1,a_2)\le 2^t$ for $A\in \a$
or a pair of $\{g_1,g_2\}\subset G$
such that $d_G(g_1,g_2) = 1$.  

Since $G$ is generated by 
$\{g\in G \ |\ d_G(e,g)\le 1\}\cup\bigcup_{P\in\famP}P$,
the augmented space $X(G,\famP,d_G)$ is connected.
This graph structure induces a metric on $V(G,\famP,d_G)$.
When we consider for $P\in\famP$, 
a left invariant proper metric $d_P:=d_G|_{P\times P}$ on $P$, 
then
$X(P,\{P\},d_P)$ is nothing but the full
subgraph of $P\sqcup (P\times \N)$ in $X(G,\famP,d_G)$.  
Moreover we can confirm that 
$X(P,\{P\},d_P)$ is an 
isometrically embedded subgraph of 
$X(G,\famP,d_G)$.

We consider the Rips complex $R_D(V(G,\famP,d_G))$ for a positive integer $D$. 
We denote 
the full subcomplexes of 
\begin{align*}
V(G,\famP,d_G)_r&=\bigsqcup_{A\in\a}(A\times \{r,\ldots\}); \\
V(G,\famP,d_G)^R&=G\sqcup \bigsqcup_{A\in\a}(A\times \{1,\ldots,R\}); \\
V(G,\famP,d_G)_r^R&=\bigsqcup_{A\in\a}(A\times \{r,\ldots,R\})
 =V(G,\famP,d_G)_r\cap V(G,\famP,d_G)^R,
\end{align*}
in $R_D(V(G,\famP,d_G))$ by 
$R_D(V(G,\famP,d_G))_r$, $R_D(V(G,\famP,d_G))^R$ and $R_D(V(G,\famP,d_G))_r^R$, 
respectively, 
where $r, R\in \N$ such that $r\le R$.

\begin{remark}\label{D}
If $r+D\le R$, then we have 
$R_D(V(G,\famP,d_G))=R_D(V(G,\famP,d_G))_r\cup R_D(V(G,\famP,d_G))^R$
and $R_D(V(G,\famP,d_G))_r^R=R_D(V(G,\famP,d_G))_r\cap R_D(V(G,\famP,d_G))^R$. 
\end{remark}

$G$ is hyperbolic relative to $\famP$
if and only if $V(G,\famP,d_G)$ is $\delta$-hyperbolic for some
$\delta\ge 0$ (see~ \cite[Theorem 3.25]{MR2448064}). 
Since $V(G,\famP,d_G)$ is $\delta$-hyperbolic, 
there exists some positive number $D_\delta$
such that for any $D\in \N$ such that $D\ge D_\delta$, 
the Rips complex $R_D(V(G,\famP,d_G))$ 
is contractible. 
Moreover we have the following: 
\begin{proposition}\label{Rips}
Let a countable group 
$G$ be hyperbolic relative to a finite family of infinite subgroups 
$\famP$. 
Suppose that $V(G,\famP,d_G)$ is $\delta$-hyperbolic, 
where $\delta$ is a non-negative number.
Then there exists some positive number $D'_\delta$
such that for any integer $D$ such that $D\ge D'_\delta$, the first
 barycentric subdivision of 
the Rips complex $R_D(V(G,\famP,d_G))$ 
is a $G$-simplicial complex 
which is a universal space for 
proper actions.
\end{proposition}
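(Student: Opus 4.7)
The plan is to verify that $K := R_D(V(G,\famP,d_G))'$, the first barycentric subdivision of the Rips complex, satisfies the defining conditions of a universal space for proper $G$-actions: $G$ acts simplicially on $K$ with finite cell stabilizers, and for every finite subgroup $H\le G$ the fixed-point subcomplex $K^H$ is nonempty and contractible.

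First I would set up the action. The group $G$ acts on $V(G,\famP,d_G)$ by left multiplication on $G\subset V$ and by $h\cdot(a,t)=(ha,t)$ on each $A\times\N$; this is an isometry for the graph metric, hence induces a simplicial $G$-action on $R_D$, which becomes admissible on $K$ in the sense that a cell fixed setwise is fixed pointwise. Since $h\cdot(a,t)=(a,t)$ forces $h=e$, the action on the vertex set of $V$ is free; so any $g\in G$ preserving a simplex $\sigma$ of $R_D$ setwise acts faithfully on the (at most $D+1$) vertices of $\sigma$, and its stabilizer embeds in a finite symmetric group. Hence the isotropy of the $G$-action on $K$ is finite.

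Contractibility of $K$ itself (the case $H=\{e\}$) is the classical Rips contractibility theorem for $\delta$-hyperbolic spaces, already invoked in the paragraph preceding the proposition: there is a constant $D_\delta$ such that $R_D(V(G,\famP,d_G))$ is contractible whenever $D\ge D_\delta$.

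The main obstacle is the contractibility of $K^H$ for a general nontrivial finite subgroup $H\le G$. Since the $G$-action on $V$ is free, no $0$-simplex of $R_D$ is $H$-fixed when $H\ne\{e\}$, so $K^H$ is the order complex of the poset $\mathcal{P}_H$ of $H$-invariant simplices of $R_D$ ordered by inclusion. For nonemptiness I would invoke the quasi-center principle for a finite group of isometries of a $\delta$-hyperbolic space: there exist a universal constant $C$ and a vertex $v\in V$ with $d(v,hv)\le C\delta$ for all $h\in H$, so that $Hv$ is an $H$-invariant subset of $V$ of diameter at most $2C\delta$, hence itself an $H$-invariant simplex of $R_D$ as soon as $D\ge 2C\delta$. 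For contractibility I would follow Meintrup-Schick~\cite{MR1974394}, adapted to accommodate torsion: given any $\sigma\in\mathcal{P}_H$, one produces an $H$-invariant enlargement $\widetilde\sigma\in\mathcal{P}_H$ containing $\sigma\cup Hv$ and still of diameter at most $D$, using $\delta$-thinness of geodesic polygons to control the diameter bound; these enlargements assemble into a chain $\sigma\subset\widetilde\sigma\supset Hv$ for every $\sigma\in\mathcal{P}_H$, yielding a star-contraction of $\mathcal{P}_H$ onto $\{Hv\}$ and hence an $H$-equivariant contraction of $K^H$ to a point. The constant $D'_\delta$ is then chosen large enough to accommodate both the quasi-center bound and the enlargement step, and pinning down its precise dependence on $\delta$ is the quantitative heart of the argument.
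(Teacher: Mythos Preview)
Your proposal is correct and follows exactly the approach the paper takes: the paper in fact omits the proof entirely, observing only that the case $\famP=\emptyset$ is the hyperbolic-group result of Meintrup--Schick~\cite{MR1887695} and that ``arguments in the proof of \cite[Theorem 1]{MR1887695} can be applied'' to $V(G,\famP,d_G)$, and your sketch is precisely a faithful outline of that argument (free vertex action giving finite simplex stabilizers, Rips contractibility for $D\ge D_\delta$, and the quasi-centre plus $H$-invariant coning argument for $K^H$). One minor correction: the Meintrup--Schick reference in this paper's bibliography is~\cite{MR1887695}, not~\cite{MR1974394}, which is Dahmani's paper on the torsion-free case.
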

\noindent
If $\famP$ is empty on the above, 
then $G$ is a hyperbolic group. 
The above for this case is known (\cite{MR1887695}). 
Since arguments in the proof of~\cite[Theorem 1]{MR1887695}
can be applied to the above, we omit its proof.

\begin{proof}[Proof of Theorem \ref{finite}]
We take a left invariant proper metric $d_G$ on $G$ such that 
$G$ is generated by $\{g\in G \ |\ d_G(e,g)\le 1\}\cup\bigcup_{P\in\famP}P$. 
We denote by $d_P$ a left invariant proper metric $d_G|_{P\times P}$ on
 $P\in\famP$.

Suppose that $V(G,\famP,d_G)$ is $\delta$-hyperbolic. 
Then for every $P\in\famP$, the vertex set
$V(P,\{P\},d_P)$ is $\delta$-hyperbolic
because $X(P,\{P\},d_P)$ is an 
isometrically embedded subgraph of $X(G,\famP,d_G)$. 
We fix $D\in\N$ such that $D\ge D'_\delta$, 
where $D'_\delta$ is a constant in Proposition \ref{Rips}.
We take $P\in \famP$ and $r,R\in \N$ such that $r+D \le R$.
Also we take for every $P\in\famP$, 
a finite $P$-simplicial complex $\underline{E}P$ which is 
a universal space for proper actions.
Since the first barycentric subdivision of $R_D(V(P,\{P\},d_P))_r$ is a $P$-simplicial complex which is 
a universal space for proper actions by Proposition \ref{Rips}, 
we have a $P$-homotopy equivalent map 
$h_P:R_D(V(P,\{P\},d_P))_r\to \underline{E}P$.
It follows from an equivariant version of simplicial approximation theorem 
(see \cite[Exercise 6 for Chapter 1]{MR0413144})
that there exist a natural number $n$ 
and a $P$-simplicial map $f_P:R^{(n)}_D(V(P,\{P\},d_P))_r\to \underline{E}P$ 
which is $P$-homotopy equivalent to $h_P$ 
where $R^{(n)}_D(V(P,\{P\},d_P))_r$ is the $n$-th barycentric subdivision
of $R_D(V(P,\{P\},d_P))_r$.
We can take $n$ independently of $P$ because
 $\famP$ is a finite family.
We consider mapping cylinders 
\begin{align*}
(R^{(n)}_D(V(P,\{P\},d_P))_r^R\times [0,1])&
  \cup_{j_P} R^{(n)}_D(V(P,\{P\},d_P))_r;\\
(R^{(n)}_D(V(P,\{P\},d_P))_r^R\times [0,1])&\cup_{q_P} \underline{E}P,
\end{align*}
whose pasting maps are 
\begin{align*}
j_P&:R^{(n)}_D(V(P,\{P\},d_P))_r^R\times \{1\}\ni (x,1)\mapsto 
   x\in R^{(n)}_D(V(P,\{P\},d_P))_r;\\
q_P&:R^{(n)}_D(V(P,\{P\},d_P))_r^R\times \{1\}\ni (x,1)\mapsto 
   f_P(x)\in \underline{E}P,
\end{align*}
respectively. 
Then the maps $id_{R^{(n)}_D(V(P,\{P\},d_P))_r^R}$ and
 $f_P$ induce a map
\begin{align*}
 \widetilde{f_P}:& (R^{(n)}_D(V(P,\{P\},d_P))_r^R\times [0,1])
 \cup_{j_P}R^{(n)}_D(V(P,\{P\},d_P))_r\rightarrow \\
 & (R^{(n)}_D(V(P,\{P\},d_P))_r^R\times [0,1])\cup_{q_P}\underline{E}P,
\end{align*}
which is a $P$-homotopy equivalent map.
In fact we can confirm that 
$\widetilde{f_P}$ is 
a $P$-homotopy equivalent map relative to 
$R^{(n)}_D(V(P,\{P\},d_P))_r^R\times \{0\}$.
Now we construct two $G$-simplicial complex $R^{(n)}_D(V(G,\famP,d_G))_1$ and
 $R^{(n)}_D(V(G,\famP,d_G))_2$ as follows: 
First, $R^{(n)}_D(V(G,\famP,d_G))_1$ is obtained
 by, for every $P\in \famP$, pasting $G$-equivariantly, 
$(R^{(n)}_D(V(P,\{P\},d_P))_r^R\times [0,1])\cup_{j_P}
 R^{(n)}_D(V(P,\{P\},d_P))_r,$ to $R^{(n)}_D(V(G,\famP,d_G))^R$ 
by the pasting map
\[
 R^{(n)}_D(V(P,\{P\},d_P))_r^R\times \{0\} 
 \rightarrow R^{(n)}_D(V(P,\{P\},d_P))_r^R.
\]
Second, $R^{(n)}_D(V(G,\famP,d_G))_2$ is obtained by, for every $P\in \famP$,
 pasting $G$-equivariantly, 
$R^{(n)}_D(V(P,\{P\},d_P))_r^R\times [0,1]\cup_{q_P} \underline{E}P$ to
 $R^{(n)}_D(V(G,\famP,d_G))^R$ by the same pasting map.
Then they are $G$-homotopy equivalent by the induced map by
$id_{R^{(n)}_D(V(G,\famP,d_G))^R}$ and $\widetilde{f_P}$ for any $P\in\famP$.  Since
$R^{(n)}_D(V(G,\famP,d_G))$ is clearly $G$-homotopic to
 $R^{(n)}_D(V(G,\famP,d_G))_1$ by
Remark \ref{D}, we have
$R^{(n)}_D(V(G,\famP,d_G))$ is $G$-homotopic to
$R^{(n)}_D(V(G,\famP,d_G))_2$.  It follows from Proposition \ref{Rips} that
$R^{(n)}_D(V(G,\famP,d_G))_2$ is a $G$-simplicial complex which is a universal
space for proper actions. It is also 
clear that $R^{(n)}_D(V(G,\famP,d_G))_2$ is a finite $G$-simplicial complex by
the construction. $G$ is naturally embedded in
 $R^{(n)}_D(V(G,\famP,d_G))_2$. $R^{(n)}_D(V(P,\{P\},d_P))_2$ is a
 subcomplex of $R^{(n)}_D(V(G,\famP,d_G))_2$ and is a finite universal
 $P$-simplicial complex with the natural embedding of $P$.
\end{proof}

\section{The coarse Baum-Connes conjecture for hyperbolic metric spaces}
\label{appendix:coarse-baum-connes-for-hyp}
Higson and Roe \cite[Corollary 8.2]{MR1388312} proved the coarse Baum-Connes conjecture for hyperbolic
metric spaces. The following Proposition~\ref{prop:coarsenin_map} plays
an important role in their proof. 
\begin{proposition}
\label{prop:coarsenin_map}
 Let $Y$ be a compact metric space and let $\mathcal{O}Y$ denote an
 open cone of $Y$. Then the coarsening map
\[
 \mu \colon K_*(\mathcal{O}Y) \rightarrow KX_*(\mathcal{O}Y)
\]
is an isomorphism.
\end{proposition}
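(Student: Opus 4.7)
The plan is to exhibit the coarsening map $\mu$ as an isomorphism by constructing an explicit anti-\v{C}ech system $\{\mathcal{U}_n\}_n$ of $\mathcal{O}Y$ whose geometric realisations are homotopy equivalent to $\mathcal{O}Y$ itself, compatibly with the bonding maps of the direct system. The compactness of $Y$ is essential here: at each height $t$ only finitely many ``angular'' balls are needed to cover $Y\times\{t\}$ at any fixed angular scale, so the open cone metric furnishes such covers naturally.

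Concretely, I would first choose, for each $n\geq 1$, a nested family of finite $2^{-n}$-nets $Y_n\subset Y_{n+1}\subset Y$. At height $t\in[2^k,2^{k+1}]$ I would cover $Y\times\{t\}$ by cone-metric balls of radius $\sim 2^n$ centred on the rescaled net $Y_{n+k}\times\{t\}$, and extend these vertically over the interval $[2^k,2^{k+1}]$ to form ``truncated conical columns''. The collection of all such columns constitutes a locally finite cover $\mathcal{U}_n$ with uniformly bounded multiplicity and Lebesgue number on the order of $2^n$; the refinement maps $\mathcal{U}_n\to\mathcal{U}_{n+1}$ are the obvious ones, making $\{\mathcal{U}_n\}$ an anti-\v{C}ech system of $\mathcal{O}Y$.

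Next, I would verify that each nerve $\abs{\mathcal{U}_n}$ is homotopy equivalent to $\mathcal{O}Y$. Restricted to any compact annulus $Y\times[2^k,2^{k+1}]$, the cover becomes a finite good cover (all multiple intersections being contractible conical prisms), so by the Nerve Theorem its nerve is homotopy equivalent to $Y\times[2^k,2^{k+1}]\simeq Y$. Assembling across $k$, the nerve $\abs{\mathcal{U}_n}$ acquires the structure of a mapping telescope of identities $Y\to Y$, hence is homotopy equivalent to $\mathcal{O}Y$. A partition-of-unity map $h_n\colon \mathcal{O}Y\to\abs{\mathcal{U}_n}$ realises this homotopy equivalence, and the bonding maps agree up to homotopy with $h_{n+1}\circ h_n^{-1}$. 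Passing to the direct limit in $K$-homology then identifies $\mu\colon K_*(\mathcal{O}Y)\to \varinjlim K_*(\abs{\mathcal{U}_n})=KX_*(\mathcal{O}Y)$ as an isomorphism.

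The main obstacle is the bookkeeping needed to make the nerves genuinely compute $\mathcal{O}Y$: the covers must be ``good'' enough (all multiple intersections contractible) to invoke the Nerve Theorem, while simultaneously being locally finite, of uniformly bounded multiplicity, and functorial under the bonding maps. An alternative, potentially cleaner, route is an induction on height via a coarse Mayer-Vietoris argument, splitting $\mathcal{O}Y = \mathcal{O}Y^{[0,R]}\cup \mathcal{O}Y^{[R,\infty)}$ along the compact slice $Y\times\{R\}$, reducing to the known isomorphism for compact spaces and exploiting an Eilenberg-swindle on the half-infinite cone to kill both $K_*$ and $KX_*$ on that piece, then invoking the five lemma.
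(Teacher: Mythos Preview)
Your primary approach has a genuine gap: the Nerve Theorem requires a \emph{good} cover, meaning all nonempty finite intersections of cover elements are contractible. For an arbitrary compact metric space $Y$, metric balls and their intersections need not be contractible---if $Y$ is a Cantor set, any ball containing more than one point is again a Cantor set; if $Y$ is a wedge of circles, small balls about the wedge point fail to be simply connected. Your ``truncated conical columns'' are products $(\text{ball in }Y)\times(\text{interval})$, hence homotopy equivalent to the ball factor, not contractible in general; calling the intersections ``contractible conical prisms'' conflates cylinders with cones. So the identification $\abs{\mathcal{U}_n}\simeq\mathcal{O}Y$ via the Nerve Theorem is unjustified precisely in the generality the proposition is stated for.

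The paper avoids this by first embedding $Y$ in the unit sphere of $l_2$, so that $\mathcal{O}Y\subset l_2$, and then building the anti-\v{C}ech system from \emph{ambient} Hilbert-space balls $B(p_m^n,3^i)\cap\mathcal{O}Y$, with finitely many centres $p_m^n$ chosen at each integer height $n$ by compactness of $Y$. The essential observation is the containment $\bigcup_{n,m}B(p_m^n,3^i)\subset\Pen(\mathcal{O}Y,3^i)$, after which the argument of Higson--Roe for finite-dimensional $Y$ applies verbatim; that argument exploits the ambient linear (convex) structure rather than any intrinsic contractibility of the cover sets in $\mathcal{O}Y$. Your alternative Mayer--Vietoris plus Eilenberg-swindle route is plausible and would constitute a genuinely different proof, but as written it is only a sketch; in particular you would still need to verify that the swindle on $Y\times[R,\infty)$ with the \emph{cone} metric (not the product metric) goes through for $KX_*$.
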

Higson and Roe \cite[Proposition 4.3]{MR1388312} proved this proposition
assuming that
the dimension of $Y$ is finite. Here we prove it without assuming that.
\begin{proof}
Any compact metric space can be embedded in the separable
 Hilbert space $l_2$. In fact, the stereographic projection gives an
 embedding in the unit ball of $l_2$. 
So we assume $Y \subset \{\vect{x} \in l_2 : \norm{\vect{x}} = 1\}$.
Then the open cone of $Y$ is given by
$\mathcal{O}Y = \{t\vect{x} \in l_2 : \vect{x} \in Y, t\in
 [0,\infty)\}$.
For $I\subset (0,\infty)$, set
\[
 Y \times I= \{t\vect{x} \in l_2 : \vect{x} \in Y, t\in I\}.
\]
Since $Y$ is compact, for each $n\in N$, there exist
 $p^n_1,\dots,p^n_{a_n} \in Y\times \{n\}$ such that
\begin{align}
\label{eq:1}
 \bigcup_{m=1}^{a_n}B(p^n_m,1) \supset Y\times \{n\}.
\end{align}
Here $B(x,r)$ denotes a ball of radius $r$ centered at $x$. 
Then we have
\[
 \bigcup_{m=1}^{a_n}B(p^n_m,2) \supset Y\times [n-1,n+1].
\]

For each $i\in \N$, we form a cover $\mathcal{U}_i$ of $\mathcal{O}Y$ as
 follows:
\begin{align*}
 U_m^n(i) &=  B(p_m^n, 3^i)\cap \mathcal{O}Y, \, m = 1,\dots, a_n, \\
 \mathcal{U}_i &= \bigcup_{n\geq 1}\{U_1^n(i), \dots, U_{a_n}^n(i)\}.
\end{align*}
It is clear that $\mathcal{U}_i$ is a locally finite cover and thus we
 obtain an anti-\v{C}ech system $\{\mathcal{U}_i\}_{i\geq 1}$. By the
 definition, it follows that
\[
 \bigcup_{n\geq 1}\bigcup_{m=1}^{a_n} B(p_m^n,3^i)\subset 
 \mathrm{Pen}(\mathcal{O}Y, 3^i).
\]
Then the method used in the proof of \cite[Proposition 4.3]{MR1388312}
 can be applied to $\{\mathcal{U}_i\}_{i\geq 1}$. This completes the
 proof of Proposition~\ref{prop:coarsenin_map}.
\end{proof}
\bibliographystyle{amsplain}
\bibliography{/Users/tomo_xi/Library/tex/books,/Users/tomo_xi/Library/tex/math,/Users/tomo_xi/Library/tex/preprints}

\bigskip
\address{ Tomohiro Fukaya \endgraf
Department of Mathematics, Kyoto University, Kyoto 606-8502, Japan}

\textit{E-mail address}: \texttt{tomo@math.kyoto-u.ac.jp}

\address{ Shin-ichi Oguni\endgraf
Department of Mathematics, Faculty of Science,
Ehime University,
2-5 Bunkyo-cho,
Matsuyama,
Ehime,
790-8577 Japan
}

\textit{E-mail address}: \texttt{oguni@math.sci.ehime-u.ac.jp}

\end{document}